\numberwithin{equation}{section}
\numberwithin{figure}{section}
  \theoremstyle{plain}
  \newtheorem*{assumption*}{Assumption}
  \theoremstyle{plain}
  \newtheorem*{thm*}{Theorem}
 \theoremstyle{definition}
 \newtheorem*{defn*}{Definition}
  \theoremstyle{plain}
  \newtheorem*{prop*}{Proposition}
  \theoremstyle{plain}
  \newtheorem*{lem*}{Lemma}
  \theoremstyle{remark}
  \newtheorem*{rem*}{Remark}
\begin{document}

\title{Eigenvalues for a Schrödinger operator on a closed Riemannian manifold
with holes}

\author{Olivier Lablée}

\date{12 September 2013}
\maketitle
\begin{abstract}
In this article we consider a closed Riemannian manifold $(M,g)$
and $A$ a subset of $M$. The purpose of this article is the comparison
between the eigenvalues $\left(\lambda_{k}(M)\right)_{k\geq1}$ of
a Schrödinger operator $P:=-\Delta_{g}+V$ on the manifold $(M,g)$
and the eigenvalues $\left(\lambda_{k}(M-A)\right)_{k\geq1}$ of $P$
on the manifold $(M-A,g)$ with Dirichlet boundary conditions.
\end{abstract}

\section{Introduction}

The behaviour of the spectrum of a Riemannian manifold $(M,g)$ under
topological perturbation has been the subject of many research. The
most famous exemple is the crushed ice problem \textbf{{[}Kac{]}},
see also \textbf{{[}Ann{]}}. This problem consists to understand the
behaviour of Laplacian eigenvalues with Dirichlet boundary on a domain
with small holes. This subject was first studied by M. Kac \textbf{{[}Kac{]}}
in 1974. Then, J. Rauch and M. Taylor\textbf{ {[}Ra-Ta{]}} studied
the case of Euclidian Laplacian in a compact set $M$ of $\mathbb{R}^{n}$
: they showed that the spectrum of $\Delta_{\mathbb{R}^{n}}$ is invariant
by a topological excision of a $M$ by a compact subset $A$ with
a Newtonian capacity zero. Later, S. Osawa, I. Chavel and E. Feldman
\textbf{{[}Ca-Fe1{]}}, \textbf{{[}Ca-Fe2{]}} treated the Riemmannian
manifold case. They used complex probalistic techniques based on Brownian
motion. In \textbf{{[}Ge-Zh{]}}, F. Gesztesy and Z. Zhao investigate
the study the case of a Schrödinger operator with Dirichlet boundary
conditions $\mathbb{R}^{n}$, they use probabilistic tools. In 1995,
in a nice article \textbf{{[}Cou{]}} G . Courtois studied the case
of Laplace Beltrami operator on closed Riemannian manifold. He used
very simple techniques of analysis. In \textbf{{[}Be-Co{]}} J. Bertrand
and B. Colbois explained also the case of Laplace Beltrami operator
on compact Riemannian manifold. In this article we focus on the the
Schrödinger operator $-\Delta_{g}+V$ case on a closed Riemannian
manifold. 
\begin{assumption*}
The manifold is closed (i.e. compact without boundary); the function
$V$ is bounded on the manifold $M$ and $\min_{M}V>0$.
\end{assumption*}
In this work we show that under {}``little'' topological excision
of a part $A$ from the manifold, the spectrum of $-\Delta_{g}+V$
on $M-A$ is close of the spectrum on $M$. More precisely, the {}``good''
parameter for measuring the littleness of $A$ is a type of electrostatic
capacity defined by : 

\[
\textrm{cap}(A):=\textrm{inf}\left\{ Q(u),\, u\in H^{1}(M),\,\int_{M}u\, d\mathcal{V}_{g}=0,\, u-e_{1}\in H_{0}^{1}(M-A)\right\} \]
where $e_{1}$ denotes the first eigenfunction of the operator $-\Delta_{g}+V$
on the manifold $M$, and $Q$ is the following quadratic form :

\[
Q(\varphi):=\int_{M}\left|d\varphi\right|^{2}\, d\mathcal{V}_{g}+\int_{M}V\left|\varphi\right|^{2}\, d\mathcal{V}_{g}\]
and $H_{0}^{1}(M-A)$ is the \textit{Sobolev space} defined by : \[
H_{0}^{1}(M-A):=\overline{\left\{ g\in H^{1}(M),\, g=0\;\textrm{on a open neighborhood of }A\,\right\} }\]
the closure is for the norm $\left\Vert .\right\Vert _{H^{1}(M)}$,
$H^{1}(M)$ is the usual Sobolev space on $M$.\\
Indeed, more $\textrm{cap}(A)$ is small, more the spectrum $-\Delta_{g}+V$
on $M-A$ is close of the spectrum on $M$ in the following sense
:
\begin{thm*}
Let $(M,g)$ a closed Riemannian manifold. For all integer $k\geq1$,
there exists a constant $C_{k}$ depending on the manifold $(M,g)$
and on the potential $V$ such that for all subset $A$ of $M$ we
have : \[
0\leq\lambda_{k}(M-A)-\lambda_{k}(M)\leq C_{k}\sqrt{\textrm{cap}(A)}.\]

\end{thm*}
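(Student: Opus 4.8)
The plan is to obtain the lower bound from domain monotonicity and the upper bound by the classical transplantation of test functions, using the ground state $e_1$ as the cut-off profile. The lower bound is immediate: by its definition $H_0^1(M-A)$ is a closed subspace of $H^1(M)$ on which $Q$ and the $L^2$-norm are the restrictions of those on $M$, so the min-max characterisation of Dirichlet eigenvalues (infimum over $k$-dimensional subspaces of the Rayleigh quotient $Q(\varphi)/\|\varphi\|_{L^2}^2$), applied to $H^1(M)$ and to $H_0^1(M-A)$, gives $\lambda_k(M)\le\lambda_k(M-A)$. All the work is in the upper bound, for which I would exhibit a $k$-dimensional subspace of $H_0^1(M-A)$ on which the Rayleigh quotient is at most $\lambda_k(M)+C_k\sqrt{\mathrm{cap}(A)}$.

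Fix $\varepsilon>0$ and take a near-minimiser $u$ for the capacity: $\int_M u\,d\mathcal V_g=0$, $u-e_1\in H_0^1(M-A)$, $Q(u)\le\mathrm{cap}(A)+\varepsilon$. The hypothesis $\min_M V>0$ now does the work of a Poincar\'e inequality: from $\min_M V\cdot\int_M u^2\le\int_M Vu^2\le Q(u)$ and $\int_M|du|^2\le Q(u)$ one gets $\|u\|_{H^1(M)}^2\le C\,(\mathrm{cap}(A)+\varepsilon)$ with $C=C(M,g,V)$. Let $e_1,\dots,e_k$ be $L^2$-orthonormal eigenfunctions of $P$ on $M$; by elliptic regularity they are smooth, and on the compact manifold $0<c_0\le e_1\le C_0$ and $|de_1|\le C_1$. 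Since $u-e_1\in H_0^1(M-A)$ and multiplication by the smooth, nowhere-vanishing function $e_1^{-1}$ and by the smooth functions $e_j$ are bounded operators on $H^1(M)$ that preserve the property of vanishing on a neighbourhood of $A$, the functions
\[
\varphi_j:=\Big(1-\frac{u}{e_1}\Big)e_j=e_j-\frac{u}{e_1}\,e_j,\qquad j=1,\dots,k,
\]
all belong to $H_0^1(M-A)$. In effect $\eta:=1-u/e_1$ is a cut-off equal to $1$ away from $A$ and vanishing near $A$ --- all that the capacity "sees" is the ground state $e_1$ --- and $\varphi_j=\eta\,e_j$ transplants it onto every eigenfunction.

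It then remains to estimate the Rayleigh quotient on $E:=\mathrm{span}\{\varphi_1,\dots,\varphi_k\}$. Writing $e=\sum_j c_j e_j\in W_k:=\mathrm{span}\{e_1,\dots,e_k\}$ and $\varphi:=\eta e=\sum_j c_j\varphi_j$, one has $\varphi-e=-\tfrac{u}{e_1}e$; using the pointwise Cauchy--Schwarz bounds $|e|,|de|\le C_k\|e\|_{L^2}$ on the fixed finite-dimensional space $W_k$, the boundedness of $e_1^{-1}$, and $\|u\|_{H^1}^2=O(\mathrm{cap}(A)+\varepsilon)$, a direct computation gives
\[
\big|\,\|\varphi\|_{L^2}^2-\|e\|_{L^2}^2\,\big|+\big|\,Q(\varphi)-Q(e)\,\big|\le C_k\,\sqrt{\mathrm{cap}(A)+\varepsilon}\;\|e\|_{L^2}^2 .
\]
Together with $Q(e)\le\lambda_k(M)\|e\|_{L^2}^2$ this yields, as soon as $\mathrm{cap}(A)+\varepsilon$ lies below a threshold $\delta_k$ (so that $\|\varphi\|_{L^2}^2\ge\tfrac12\|e\|_{L^2}^2$; in particular $e\mapsto\eta e$ is then injective on $W_k$, so $\dim E=k$),
\[
\frac{Q(\varphi)}{\|\varphi\|_{L^2}^2}\le\frac{\lambda_k(M)+C_k\sqrt{\mathrm{cap}(A)+\varepsilon}}{1-C_k\sqrt{\mathrm{cap}(A)+\varepsilon}}\le\lambda_k(M)+C'_k\sqrt{\mathrm{cap}(A)+\varepsilon}
\]
for every $\varphi\in E\subset H_0^1(M-A)$ with $\varphi\neq0$. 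The min-max principle then gives $\lambda_k(M-A)\le\lambda_k(M)+C'_k\sqrt{\mathrm{cap}(A)+\varepsilon}$, and letting $\varepsilon\downarrow0$ settles the case $\mathrm{cap}(A)\le\delta_k$ --- the range in which the inequality carries information.

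The one point I expect to be genuinely delicate is the cross term in $Q(\varphi)=Q(e)+2Q(e,\varphi-e)+Q(\varphi-e)$: estimated crudely it forces one to differentiate $u/e_1$ and only produces a weaker power of the capacity, so one must integrate by parts and use the eigenvalue equation $-\Delta_g e_i+Ve_i=\lambda_i e_i$ to rewrite $Q(e_i,\psi)=\lambda_i\langle e_i,\psi\rangle_{L^2}$; then $Q(e,\varphi-e)=-\sum_i c_i\lambda_i\int_M u\,\tfrac{e_i e}{e_1}\,d\mathcal V_g$ is genuinely $O(\|u\|_{L^1})=O(\sqrt{\mathrm{cap}(A)+\varepsilon})$ times $\|e\|_{L^2}^2$, rather than something worse. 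That step --- together with the fact that, $e_1$ being smooth and bounded below, $e_1^{-1}$ is a bounded multiplier on $H^1(M)$ --- is where the positivity of the ground state and the compactness of $M$ really enter. Two routine checks complete the plan: no step requires $u\in L^\infty(M)$, since every error term is an integral of $|u|$ or $u^2$ against a bounded function, so the bare estimate $\|u\|_{H^1}=O(\sqrt{\mathrm{cap}(A)})$ suffices; and the normalisation $\int_M u\,d\mathcal V_g=0$ in the definition of the capacity is not actually used once $\min_M V>0$ --- it serves to make $\mathrm{cap}$ behave like a genuine capacity.
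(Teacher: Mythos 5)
Your proposal follows essentially the same route as the paper: transplant the span of the first $k$ eigenfunctions by the cut-off $1-u/e_1$ built from a (near-)minimiser of the capacity, check that these functions lie in $H_0^1(M-A)$, control the $L^2$-Gram matrix and the quadratic form via $\|u\|_{H^1(M)}^2\le C\,\textrm{cap}(A)$ (the paper's Poincar\'e inequality $\|u_A\|_{L^2(M)}^2\le \textrm{cap}(A)/\lambda_1(M)$), and conclude by the min-max principle once $\textrm{cap}(A)$ is below a threshold $\varepsilon_k$, exactly as in the paper's Steps 1--3. The only real deviation is cosmetic: you rewrite the cross term using $Q(e_i,\psi)=\lambda_i\langle e_i,\psi\rangle_{L^2}$, whereas the paper estimates it directly (its terms $D(f)$ and $E(f)$) using $\|du_A\|_{L^2(M)}\le\sqrt{\textrm{cap}(A)}$ and $\|u_A\|_{L^2(M)}\le\sqrt{\textrm{cap}(A)/\lambda_1(M)}$, which already gives the $\sqrt{\textrm{cap}(A)}$ rate, so the delicacy you anticipate there is not actually an issue.
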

The organization of this paper is the following : in the part 2 we
start by recall some classicals results in spectral theory and about
usual Sobolev spaces, next we define our specific Sobolev space $H_{0}^{1}(M-A)$
and the notion of Schrödinger capacity. In particular, we explain
the link between the functionnal Hilbert space $H_{0}^{1}(M-A)$ and
Schrödinger capacity $\textrm{cap}(A)$. The last part of this paper
is a detailed proof of the main theorem.

\section{Spectral problem background}

\subsection{Schrödinger operator on a Riemannian manifold}

We recall here some generality on spectral geometry. In Riemannian
geometry, the \textit{Laplace Beltrami operator} is the generalisation
of Laplacian $\Delta={\displaystyle \sum_{j=1}^{n}}\frac{\partial^{2}}{\partial x_{j}^{2}}$
on $\mathbb{R}^{n}$. For a $\mathcal{C}^{2}$ real valued function
$f$ on a Riemannian manifold and for a local chart $\phi\,:\, U\subset M\rightarrow\mathbb{R}$
of the manifold $M$, the Laplace Beltrami operator is given by the
local expression :

\textit{\begin{equation}
\Delta_{g}f=\frac{1}{\sqrt{g}}{\displaystyle \sum_{j,k=1}^{n}}\frac{\partial}{\partial x_{j}}\left(\sqrt{g}g^{jk}\frac{\partial(f\circ\phi^{-1})}{\partial x_{k}}\right)\label{eq:}\end{equation}
}where $g=\det(g_{ij})$ and $g^{jk}=(g_{jk})^{-1}$.\\
The spectrum of this operator is a nice geometric invariant, see
Berger, Gauduchon and Mazet \textbf{{[}BGM{]}} and \textbf{{[}Bé-Be{]}}.
The spectrum of Laplace Beltrami operator has many applications in
geometry topology, physics ,etc ... \\
For every Riemannian manifold $(M,g)$ with dimension $n\geq1$
we have the {}``natural'' Hilbert space $L^{2}(M)=L^{2}(M,d\mathcal{V}_{g})$,
$\mathcal{V}_{g}$ is the Riemannian volume form associated to the
metric $g$. For $V$ a function from $M$ to $\mathbb{R},$ we define
the Schrödinger operator on the manifold $(M,g)$ by the linear unbounded
operator on the set of smooth compact supports real valued functions
$\mathcal{C}_{c}^{\infty}(M)\subset L^{2}(M)$ by : $-\Delta_{g}+V.$

\subsection{Sobolev spaces}

Let us denotes by $\mathcal{C}_{c}^{\infty}(M)$ the set of smooth
functions with compact support in $M$. The set $\mathcal{C}_{c}^{\infty}(M)$
is also called the set of \textit{test functions} in the language
of distributions. Recall first that the \textit{Lebesgue space} $L^{2}(M)$
on the manifold $(M,g)$ is defined by :\[
L^{2}(M):=\left\{ f\,:\, M\rightarrow\mathbb{R}\,\textrm{ measurable such that }\int_{M}\left|f\right|^{2}\, d\mathcal{V}_{g}<+\infty\right\} .\]
This space is a Hilbert space for the scalar product :

\[
\left\langle u,v\right\rangle _{L^{2}}:=\int_{M}uv\, d\mathcal{V}_{g}.\]
Next the \textit{Sobolev space} $H^{1}(M)$ is defined by : \[
H^{1}(M):=\overline{\mathcal{C}^{\infty}(M)}\]
where the closure is for the norm $\left\Vert .\right\Vert _{H^{1}}$
: $\left\Vert u\right\Vert _{H^{1}}:=\sqrt{\left\Vert u\right\Vert _{L^{2}}^{2}+\left\Vert du\right\Vert _{L^{2}}^{2}}.$
\\
An other point of view to define the space $H^{1}(M)$ is the following
:

\[
H^{1}(M)=\left\{ u\in L^{2}(M);\, du\in L^{2}(M)\right\} \]
where the derivation is the sense of distribution.\\
The space $H^{1}(M)$ is a Hilbert space for the scalar product
:

\[
\left\langle u,v\right\rangle _{H^{1}}:=\left\langle u,v\right\rangle _{L^{2}}+\left\langle du,dv\right\rangle _{L^{2}}.\]
For finish, the Sobolev space $H_{0}^{1}(M,g)$ is defined by : 

\[
H_{0}^{1}(M):=\overline{\mathcal{C}_{c}^{\infty}(M)}\]
the closure is for the norm $\left\Vert .\right\Vert _{H^{1}(M)}$
.\\
So we have :

\[
\mathcal{C}_{c}^{\infty}(M)\subset H_{0}^{1}(M)\subset H^{1}(M)\subset L^{2}(M).\]
Recall that, for the norm $\left\Vert .\right\Vert _{L^{2}(M)}$ we
have :\[
\overline{\mathcal{C}_{c}^{\infty}(M)}=L^{2}(M).\]

\subsection{Spectral problem}

The spectral problem is the following : find all pairs $(\lambda,u)$
with $\lambda\in\mathbb{R}$ and $u\in L^{2}(M)$ such that : 

\textit{\begin{equation}
-\Delta_{g}u+Vu=\lambda u\label{eq:-1}\end{equation}
}

\begin{center}
(with $u\in L^{2}(M)$ in the non-compact case).
\par\end{center}

\vspace{0.5cm}

In the case of manifold with boundary, we need boundary conditions
on the functions $u$, for example the Dirichlet conditions : $u=0$
on the boundary of $M$, or Neumann conditions : $\frac{\partial u}{\partial n}=0$
on the boundary of $M$. In the case of closed manifolds (compact
without boundary) we don't have conditions. \\
For our context (the closed case) the natural space to look here
is the Sobolev space $H^{1}(M)$.\\
Recall here a classical theorem of spectral theory (see for example\textbf{
{[}Re-Si{]}}) :
\begin{thm*}
For the above problems, the operator $-\Delta_{g}+V$ is self-adjoint,
the spectrum of the operator $-\Delta_{g}+V$ consists of a sequence
of infinite increasing eigenvalues with finite multiplicity : \[
\lambda_{1}(M)\leq\lambda_{2}(M)\leq\cdots\leq\lambda_{k}(M)\leq\cdots\rightarrow+\infty.\]
Moreover, the associate eigenfunctions $\left(e_{k}\right)_{k\geq0}$
is a Hilbert basis of the space $L^{2}(M)$.\end{thm*}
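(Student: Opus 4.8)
This is the classical spectral theorem for $-\Delta_g+V$ on a compact manifold; the plan is to realise the operator through its quadratic form, show that it has compact resolvent, and then quote the spectral theorem for compact self-adjoint operators. \emph{Step 1 (a self-adjoint realisation).} On $\mathcal{C}^\infty(M)$ the operator $-\Delta_g+V$ is symmetric in $L^2(M)$ by Green's formula on the closed manifold (no boundary term). Rather than argue essential self-adjointness head-on, I would pass to the quadratic form $Q(u)=\int_M|du|^2\,d\mathcal V_g+\int_M V|u|^2\,d\mathcal V_g$ with form domain $H^1(M)$. Since $V$ is bounded with $\min_M V>0$, the Assumption gives $\min(1,\min_M V)\,\|u\|_{H^1}^2\le Q(u)+\|u\|_{L^2}^2\le\max(1,\|V\|_\infty)\,\|u\|_{H^1}^2$, so $Q$ is densely defined, symmetric, closed, and bounded below by $\min_M V>0$. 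The standard correspondence between closed semibounded forms and self-adjoint operators (Friedrichs/Kato) then yields a unique self-adjoint $P$ on $L^2(M)$ with $\langle Pu,v\rangle_{L^2}=Q(u,v)$; interior elliptic regularity (a local statement, applied chart by chart) identifies $D(P)=H^2(M)$ and $P=-\Delta_g+V$ as a differential operator. This $P$ is the self-adjoint operator of the statement (equivalently, the closure of $-\Delta_g+V|_{\mathcal{C}^\infty(M)}$, which is essentially self-adjoint because $M$ is complete).

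\emph{Step 2 (compact resolvent).} From $Q(u)\ge(\min_M V)\,\|u\|_{L^2}^2$ one gets $P\ge\min_M V>0$, so $P$ is invertible and $T:=P^{-1}:L^2(M)\to L^2(M)$ is bounded, self-adjoint and positive. By Step 1, $T$ maps $L^2(M)$ continuously into $D(P)\subset H^1(M)$, and on the compact manifold $M$ the Rellich--Kondrachov theorem makes the inclusion $H^1(M)\hookrightarrow L^2(M)$ compact. Hence $T$ is a compact, self-adjoint, positive operator on $L^2(M)$.

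\emph{Step 3 (diagonalisation).} Now apply the spectral theorem for compact self-adjoint operators to $T$: there is an orthonormal basis $(e_k)_{k\ge1}$ of $L^2(M)$ of eigenvectors, $Te_k=\mu_k e_k$, with $\mu_1\ge\mu_2\ge\cdots>0$, $\mu_k\to0$, and each eigenvalue of finite multiplicity (the spectrum of $T$ accumulates only at $0$, which is not an eigenvalue since $T$ is injective). Setting $\lambda_k:=\mu_k^{-1}$ gives $Pe_k=\lambda_k e_k$ with $\lambda_1\le\lambda_2\le\cdots\to+\infty$, finite multiplicities, $\lambda_k\ge\min_M V>0$, and $(e_k)_{k\ge1}$ a Hilbert basis of $L^2(M)$; since $P$ has compact resolvent, $\mathrm{spec}(P)=\{\lambda_k\}$ with no continuous part. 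This is exactly the assertion.

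The only genuinely analytic inputs are the compactness of the Sobolev embedding $H^1(M)\hookrightarrow L^2(M)$ on the compact manifold -- the crux, for without it $P$ could carry continuous spectrum -- and the elliptic-regularity identification $D(P)=H^2(M)$; I would quote both rather than reprove them, and everything else is the abstract form--operator correspondence plus the spectral theorem for compact operators. So the "obstacle" here is not a clever idea but the correct invocation of these two standard facts.
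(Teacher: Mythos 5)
Your proof is correct, and it is the standard argument: realise $-\Delta_{g}+V$ via its closed, semibounded quadratic form on $H^{1}(M)$, use Rellich--Kondrachov compactness of $H^{1}(M)\hookrightarrow L^{2}(M)$ to get a compact resolvent, and diagonalise with the spectral theorem for compact self-adjoint operators. The paper itself offers no proof of this statement --- it is recalled as a classical fact with a citation to [Re-Si] --- and your argument is exactly the proof that the cited reference supplies, so there is nothing to compare beyond noting that you have filled in correctly what the paper delegates.
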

\begin{defn*}
We define the quadradic form $Q$ with domain $D(Q):=H^{1}(M)$ by
:

\[
Q(\varphi):=\int_{M}\left|d\varphi\right|^{2}\, d\mathcal{V}_{g}+\int_{M}V\left|\varphi\right|^{2}\, d\mathcal{V}_{g}.\]

\end{defn*}
Recall also (see for example \textbf{{[}Co-Hi{]}}) the minimax variational
characterization for eigenvalues : for all $k\geq1$ 

\textit{\begin{equation}
\lambda_{k}(M)=\underset{\underset{\dim(E)=k}{E\subset H^{1}(M)}}{\mathcal{\textrm{min}}}\underset{\underset{\varphi\neq0}{\varphi\in E}}{\mathcal{\textrm{max}}}R(\varphi)\label{eq:-1-1-1}\end{equation}
}where $R(\varphi)$ is the \textit{Rayleigh quotient} of the function
$\varphi$ :

\textit{\begin{equation}
R(\varphi):=\frac{Q(\varphi)}{\int_{M}\varphi^{2}\, d\mathcal{V}_{g}}.\label{eq:-1-1}\end{equation}
}In our context, a consequence of the minimax principle is :
\begin{prop*}
The first eigenvalue $\lambda_{1}(M)$ and $e_{1}$ the first eigenfunction
of the operator $-\Delta_{g}+V$ on the manifold $(M,g$) satisfy
$\lambda_{1}(M)\geq\min_{M}V>0$ and $e_{1}>0\textrm{ or }e_{1}<0\textrm{ in }M.$ \end{prop*}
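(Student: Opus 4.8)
The plan is to treat the two claims separately. The lower bound on $\lambda_1(M)$ is immediate: by the case $k=1$ of the minimax characterization \eqref{eq:-1-1-1}, and since $V\geq\min_M V$ pointwise while $\int_M|d\varphi|^2\,d\mathcal{V}_g\geq0$, every nonzero $\varphi\in H^1(M)$ satisfies $Q(\varphi)\geq\min_M V\int_M\varphi^2\,d\mathcal{V}_g$, hence $R(\varphi)\geq\min_M V$. Taking the infimum over $\varphi$ gives $\lambda_1(M)\geq\min_M V$, which is positive by the Assumption.

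For the sign of $e_1$, the first step is the standard observation that $|e_1|$ is again a first eigenfunction. The eigenfunction $e_1$ realizes the minimum, $R(e_1)=\lambda_1(M)$; and for any $u\in H^1(M)$ one has $|u|\in H^1(M)$ with $\bigl|d|u|\bigr|=|du|$ almost everywhere, so that $Q(|e_1|)=Q(e_1)$ and $\int_M|e_1|^2\,d\mathcal{V}_g=\int_M e_1^2\,d\mathcal{V}_g$. Thus $|e_1|$ is also a minimizer and therefore --- weakly, then classically after elliptic regularity --- satisfies $-\Delta_g|e_1|+V|e_1|=\lambda_1(M)\,|e_1|$. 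I would also record here that, $V$ being bounded, bootstrapping the equation $\Delta_g e_1=(V-\lambda_1(M))\,e_1$ (right-hand side first in $L^2$, then in successively better $L^p$ by Sobolev embedding) makes $e_1$, and likewise $|e_1|$, continuous on $M$.

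The second step is the maximum principle. Setting $u:=|e_1|\geq0$ and $c:=V-\lambda_1(M)$ and splitting $c=c^+-c^-$, the equation reads $(-\Delta_g+c^+)u=c^-u\geq0$, so $u$ is a nonnegative supersolution of a uniformly elliptic operator with bounded coefficients and nonnegative zeroth-order term. If $u$ vanished at a point of $M$ it would attain there its minimum value $0$, and the strong maximum principle (equivalently the weak Harnack inequality) would force $u\equiv0$ on the connected manifold $M$, contradicting $e_1\neq0$. Hence $|e_1|>0$ throughout $M$, so $e_1$ never vanishes and, being continuous on the connected manifold $M$, keeps a constant sign: $e_1>0$ in $M$ or $e_1<0$ in $M$. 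I expect the only genuinely delicate points to be the regularity bootstrap that makes $e_1$ continuous for a merely bounded potential, and the fact that $c=V-\lambda_1(M)$ need not be nonnegative --- dealt with by transferring its negative part to the right-hand side, where it multiplies the nonnegative function $u$, so that the classical maximum principle still applies.
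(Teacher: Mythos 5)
Your proposal is correct and follows essentially the same route as the paper: the variational characterization gives $\lambda_1(M)\geq\min_M V>0$, and the sign statement comes from observing that $|e_1|$ is again a first eigenfunction and applying the strong maximum principle to conclude it cannot vanish. Your version is slightly more careful than the paper's (explicit regularity bootstrap, and handling the possibly negative zeroth-order coefficient by splitting $V-\lambda_1(M)$, whereas the paper keeps $V>0$ on the left and uses $(-\Delta_g+V)f=\lambda_1(M)f\geq0$ directly), but these are refinements of the same argument, not a different one.
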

\begin{proof}
It's clear that\[
\int_{M}\left|de_{1}\right|^{2}\, d\mathcal{V}_{g}+\int_{M}V\left|e_{1}\right|^{2}\, d\mathcal{V}_{g}\geq\underset{M}{\mathcal{\textrm{min}}}V\left\Vert e_{1}\right\Vert _{L^{2}(M)}^{2}\]
and on the other hand \[
\int_{M}\left|de_{1}\right|^{2}\, d\mathcal{V}_{g}+\int_{M}V\left|e_{1}\right|^{2}\, d\mathcal{V}_{g}=-\int_{M}\Delta_{g}e_{1}e_{1}\, d\mathcal{V}_{g}+\int_{M}V\left|e_{1}\right|^{2}\, d\mathcal{V}_{g}\]

\[
=\int_{M}\left(-\Delta_{g}+V\right)e_{1}e_{1}\, d\mathcal{V}_{g}=\lambda_{1}(M)\left\Vert e_{1}\right\Vert _{L^{2}(M)}^{2}\]
so $\lambda_{1}(M)\geq\min_{M}V.$ Next, suppose the function $e_{1}$
changes sign into $M$, since $e_{1}\in H^{1}(M)$, the function $f:=\left|e_{1}\right|$
belongs to $H^{1}(M)$ and $\left|df\right|=\left|de_{1}\right|$
(see for example\textbf{ {[}Gi-Tr{]}}), hence $R(f)=R\left(e_{1}\right)$.
So, the function $f$ is a first eigenfunction of $-\Delta_{g}+V$
on the manifold $M$ which satisfies $f\geq0$ on $M$, $f$ vanish
into $M$ and $\left(-\Delta_{g}+V\right)f=\lambda_{1}(M)f\geq0$
on $M$. Using the maximum principle \textbf{{[}Pr-We{]}}, the function
$f$ can not achieved it minimum in an interior point of the manifold
$M$, hence $f$ does not vanish on $M$, so we obtain a contradiction.
\end{proof}

\section{Proof of the main theorem }

\subsection{Somes other usefull spaces}

We define on the space $H^{1}(M)$ the $\star$-norm by : \[
\left\Vert u\right\Vert _{\star}^{2}:=\int_{M}\left|du\right|^{2}\, d\mathcal{V}_{g}+\int_{M}V\left|u\right|^{2}\, d\mathcal{V}_{g}\]
so, without difficulty we have :
\begin{prop*}
The application $\left\Vert .\right\Vert _{\star}$ is a norm on the
space $H^{1}(M)$; moreover this norm is equivalent to the Sobolev
norm $\left\Vert .\right\Vert _{H^{1}(M)}$. In particular $H^{1}(M),\left\Vert .\right\Vert _{\star}$
is a Banach space.
\end{prop*}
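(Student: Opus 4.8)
The plan is to show two things: first that $\|\cdot\|_\star$ is a genuine norm on $H^1(M)$, and second that it is equivalent to $\|\cdot\|_{H^1(M)}$, from which the Banach property follows since $\bigl(H^1(M),\|\cdot\|_{H^1(M)}\bigr)$ is complete and equivalent norms share completeness. The key hypothesis to exploit is the running Assumption that $V$ is bounded on $M$ with $m:=\min_M V>0$; let $\Lambda:=\sup_M V<+\infty$.

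For the norm axioms: homogeneity $\|\lambda u\|_\star=|\lambda|\,\|u\|_\star$ and the triangle inequality are immediate because $\|\cdot\|_\star$ is the norm attached to the inner product $\langle u,v\rangle_\star:=\int_M\langle du,dv\rangle\,d\mathcal{V}_g+\int_M Vuv\,d\mathcal{V}_g$, which is bilinear, symmetric, and positive semidefinite since $V>0$. Definiteness is the only point with content: if $\|u\|_\star=0$ then $\int_M V|u|^2\,d\mathcal{V}_g=0$, and since $V\geq m>0$ this forces $\int_M|u|^2\,d\mathcal{V}_g=0$, i.e.\ $u=0$ in $L^2(M)$, hence $u=0$ in $H^1(M)$. (The gradient term alone would not give definiteness because constants have zero gradient; it is precisely the strict positivity of $V$ that rescues it.)

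For the equivalence of norms, I would establish the two-sided bound
\[
\min(1,m)\,\|u\|_{H^1(M)}^2\;\leq\;\|u\|_\star^2\;\leq\;\max(1,\Lambda)\,\|u\|_{H^1(M)}^2
\qquad\text{for all }u\in H^1(M).
\]
Both inequalities are pointwise-in-the-integrand comparisons: $\|u\|_\star^2=\int_M|du|^2\,d\mathcal{V}_g+\int_M V|u|^2\,d\mathcal{V}_g$, and one bounds the factor $V$ above by $\Lambda$ and below by $m$, then compares $\int_M|du|^2+\int_M|u|^2=\|u\|_{H^1(M)}^2$ with the coefficients $1$ and $\min(1,m)$, resp.\ $1$ and $\max(1,\Lambda)$, on the two terms. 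This uses $V\in L^\infty(M)$ for the upper bound and $\min_M V>0$ for the lower bound — exactly the two halves of the Assumption.

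Finally, completeness: $\bigl(H^1(M),\|\cdot\|_{H^1(M)}\bigr)$ is a Hilbert space (stated in the Sobolev spaces subsection), in particular a Banach space; a norm equivalent to a complete norm on the same vector space is itself complete, since Cauchy sequences and convergent sequences are the same for the two norms. Hence $\bigl(H^1(M),\|\cdot\|_\star\bigr)$ is a Banach space. There is no real obstacle here — the proposition is genuinely routine — but if I had to name the one place where the hypotheses are essential rather than cosmetic, it is the definiteness/lower-bound step, which would fail for $V\equiv 0$ (the Laplacian's kernel, the constants) and which is the reason the paper insists on $\min_M V>0$ throughout.
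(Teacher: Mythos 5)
Your proof is correct, and since the paper states this proposition with no proof at all (``without difficulty we have''), your argument simply supplies the routine details the author intended: the two-sided bound $\min(1,\min_M V)\,\|u\|_{H^1(M)}^2\leq\|u\|_\star^2\leq\max(1,\sup_M V)\,\|u\|_{H^1(M)}^2$, definiteness via $\min_M V>0$, and completeness transferred along equivalent norms. Your remark that the lower bound is exactly where the assumption $\min_M V>0$ is indispensable (it fails for $V\equiv0$ because of constants) is the right thing to single out.
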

Let us denotes by $\mathcal{C}_{c}^{\infty}(M-A)$ the set of smooth
functions with compact support on $M-A$. For a compact subset $A$
of the manifold $M$ the usual Sobolev space $H_{0}^{1}(M-A)$ is
defined by the closure of $\mathcal{C}_{c}^{\infty}(M-A)$ for the
norm $\left\Vert .\right\Vert _{H^{1}(M)}$ :\[
H_{0}^{1}(M-A):=\overline{\mathcal{C}_{c}^{\infty}(M-A)}.\]
What happens when the set $A$ is not compact ? For example if $A$
is a dense and countable subset of points of the manifold $M$, the
space of test functions $\mathcal{C}_{c}^{\infty}(M-A)$ is reduced
to $\{0\}$. Therefore we cannot define the space $H_{0}^{1}(M-A)$.
In this case, we propose a definition of $H_{0}^{1}(M-A)$ for any
subset $A$ of $M$.
\begin{defn*}
We define the Sobolev spaces $\mathcal{H}_{0}^{1}(M-A)$ and $H_{0}^{1}(M-A)$
by :\[
\mathcal{H}_{0}^{1}(M-A):=\left\{ g\in H^{1}(M),\, g=0\;\textrm{on a open neighborhood of }A\,\right\} ;\]
\[
H_{0}^{1}(M-A):=\overline{\mathcal{H}_{0}^{1}(M-A)}\]
where the closure is for the norm $\left\Vert .\right\Vert _{H^{1}(M)}.$
\end{defn*}
We have the :
\begin{prop*}
If the set $A$ is compact, the previous definition of the space $H_{0}^{1}(M-A)$
coincides with the usal ones.\end{prop*}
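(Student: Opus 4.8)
The plan is to prove the two inclusions between $\overline{\mathcal{C}_{c}^{\infty}(M-A)}$ and $\overline{\mathcal{H}_{0}^{1}(M-A)}$ separately, all closures being taken for the norm $\left\Vert .\right\Vert _{H^{1}(M)}$. The first inclusion $\overline{\mathcal{C}_{c}^{\infty}(M-A)}\subseteq\overline{\mathcal{H}_{0}^{1}(M-A)}$ is immediate: if $f\in\mathcal{C}_{c}^{\infty}(M-A)$ then $K:=\operatorname{supp}f$ is a compact subset of $M-A$, and since $A$ and $K$ are disjoint closed sets, $M\setminus K$ is an open neighborhood of $A$ on which $f$ vanishes, so $f\in\mathcal{H}_{0}^{1}(M-A)$; passing to closures gives the claim.

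For the reverse inclusion it suffices to show $\mathcal{H}_{0}^{1}(M-A)\subseteq\overline{\mathcal{C}_{c}^{\infty}(M-A)}$. Fix $g\in\mathcal{H}_{0}^{1}(M-A)$ and an open set $U\supseteq A$ with $g\equiv0$ on $U$. Since the compact manifold $M$ is normal, choose open sets with $A\subseteq V_{1}\subseteq\overline{V_{1}}\subseteq V_{2}\subseteq\overline{V_{2}}\subseteq U$ together with a smooth cutoff $\chi\in\mathcal{C}^{\infty}(M)$ satisfying $0\leq\chi\leq1$, $\chi\equiv0$ on $V_{1}$ and $\chi\equiv1$ on $M\setminus V_{2}$. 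Because $g\in H^{1}(M)=\overline{\mathcal{C}^{\infty}(M)}$, pick $\phi_{n}\in\mathcal{C}^{\infty}(M)$ with $\phi_{n}\rightarrow g$ in $H^{1}(M)$. Each $\chi\phi_{n}$ is smooth and vanishes on the open set $V_{1}\supseteq A$, hence $\operatorname{supp}(\chi\phi_{n})$ is a compact subset of $M-A$, i.e. $\chi\phi_{n}\in\mathcal{C}_{c}^{\infty}(M-A)$. It remains to check $\chi\phi_{n}\rightarrow g$ in $H^{1}(M)$. Since $\chi-1$ is supported in $\overline{V_{2}}\subseteq U$ and $g\equiv0$ there, we have $(\chi-1)g=0$, so $\chi\phi_{n}-g=\chi(\phi_{n}-g)$; writing $d(\chi\psi)=\chi\,d\psi+\psi\,d\chi$ with $\psi:=\phi_{n}-g$ gives the bound $\left\Vert \chi\psi\right\Vert _{H^{1}(M)}\leq(1+\left\Vert \chi\right\Vert _{\infty}+\left\Vert d\chi\right\Vert _{\infty})\left\Vert \psi\right\Vert _{H^{1}(M)}$, whence $\left\Vert \chi\phi_{n}-g\right\Vert _{H^{1}(M)}\rightarrow0$. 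Thus $g\in\overline{\mathcal{C}_{c}^{\infty}(M-A)}$, and taking closures yields $\overline{\mathcal{H}_{0}^{1}(M-A)}\subseteq\overline{\mathcal{C}_{c}^{\infty}(M-A)}$, completing the proof.

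The argument is essentially soft, relying only on the smooth Urysohn lemma, the density of $\mathcal{C}^{\infty}(M)$ in $H^{1}(M)$, and boundedness of multiplication by a fixed smooth function on $H^{1}(M)$. The only point requiring a little care is the cutoff step: one must verify that the cross term $(\chi-1)g$ vanishes \emph{identically}, which is precisely where the hypothesis that $g$ vanishes on a whole open neighborhood of $A$ (rather than merely on $A$) is used.
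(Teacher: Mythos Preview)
Your proof is correct and follows essentially the same strategy as the paper: the easy inclusion $\mathcal{C}_{c}^{\infty}(M-A)\subset\mathcal{H}_{0}^{1}(M-A)$, and for the reverse, approximating $g\in\mathcal{H}_{0}^{1}(M-A)$ by $\chi\phi_{n}$ where $\phi_{n}\to g$ are smooth and $\chi$ is a smooth cutoff vanishing near $A$ and equal to $1$ outside $U$. Your observation that $\chi\phi_{n}-g=\chi(\phi_{n}-g)$ (because $(\chi-1)g\equiv 0$) is a cleaner way to organize the estimate than the paper's explicit splitting over $U$ and $M\setminus U$, but the content is the same.
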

\begin{proof}
Let $f\in H_{0}^{1}(M-A):=\overline{\mathcal{H}_{0}^{1}(M-A)}$, then
by definition : for all $\varepsilon\geq0$ there exists $g\in\mathcal{H}_{0}^{1}(M-A)$
such that $\left\Vert f-g\right\Vert _{H^{1}(M)}\leq\varepsilon$.
So, we will show that we can write $g$ as a limit of sequence from
the space $\mathcal{C}_{c}^{\infty}(M-A)$ and conclude. Since $g\in\mathcal{H}_{0}^{1}(M-A)$
there exists an open set $U\supset A$ such that $g_{|U}=0$. Consider
two open sets $U_{1}$ and $U_{2}$ of the manifold $M$ such that
:\[
A\subset U_{1},\, M-U\subset U_{2},\, U_{1}\cap U_{2}=\emptyset;\]
and consider also a function $\varphi\in\mathcal{D}(M)$ such that
:\[
\varphi_{|U_{1}}=0,\;\varphi_{|U_{2}}=1.\]
Of course, the function $\varphi$ belongs to the space $\mathcal{C}_{c}^{\infty}(M-A)$.
Next, since $g\in\mathcal{H}_{0}^{1}(M-A)\subset H^{1}(M)$ and as
the set of smooth functions $\mathcal{C}^{\infty}(M)$ is dense in
$H^{1}(M)$ : there exists a sequence $\left(g_{n}\right)_{n}$ in
$\mathcal{C}^{\infty}(M)$ such that ${\displaystyle \lim_{n\rightarrow+\infty}}g_{n}=g$
for the norm $\left\Vert .\right\Vert _{H^{1}(M)}.$ Therefore we
claim that : ${\displaystyle \lim_{n\rightarrow+\infty}}\varphi g_{n}=g$
for the norm $\left\Vert .\right\Vert _{H^{1}(M)}$. Indeed, start
by, for all integer $n$ : \[
\left\Vert \varphi g_{n}-g\right\Vert _{H^{1}(M)}^{2}\leq\left\Vert g_{n}-g\right\Vert _{H^{1}(M-U)}^{2}+\left\Vert \varphi g_{n}-g\right\Vert _{H^{1}(U)}^{2}\]
\[
\leq\left\Vert g_{n}-g\right\Vert _{H^{1}(M)}^{2}+\left\Vert \varphi g_{n}-g\right\Vert _{H^{1}(U)}^{2}.\]
Next, we observe that, for all integer $n$ :\[
\left\Vert \varphi g_{n}-g\right\Vert _{H^{1}(U)}^{2}=\left\Vert \varphi g_{n}\right\Vert _{H^{1}(U)}^{2}\]
\[
=\int_{U}\left|\varphi g_{n}\right|^{2}\, d\mathcal{V}_{g}+\int_{U}\left|d\varphi g_{n}+\varphi dg_{n}\right|^{2}\, d\mathcal{V}_{g}\]
\[
\leq\int_{U}\left|\varphi g_{n}\right|^{2}\, d\mathcal{V}_{g}+\int_{U}\left|d\varphi g_{n}\right|^{2}\, d\mathcal{V}_{g}+\int_{U}\left|\varphi dg_{n}\right|^{2}\, d\mathcal{V}_{g}+2\int_{U}\left|d\varphi g_{n}\varphi dg_{n}\right|\, d\mathcal{V}_{g}\]
\[
\leq\left\Vert \varphi\right\Vert _{\infty}^{2}\left\Vert g_{n}\right\Vert _{L^{2}(U)}^{2}+\left\Vert d\varphi\right\Vert _{L^{\infty}(M)}^{2}\left\Vert g_{n}\right\Vert _{L^{2}(U)}^{2}\]
\[
+\left\Vert \varphi\right\Vert _{\infty}^{2}\left\Vert dg_{n}\right\Vert _{L^{2}(U)}^{2}+2\left\Vert d\varphi\right\Vert _{\infty}\left\Vert \varphi\right\Vert _{\infty}\int_{U}\left|g_{n}dg_{n}\right|\, d\mathcal{V}_{g}\]
\[
\leq\left\Vert \varphi\right\Vert _{\infty}^{2}\left\Vert g_{n}\right\Vert _{L^{2}(U)}^{2}+\left\Vert d\varphi\right\Vert _{\infty}^{2}\left\Vert g_{n}\right\Vert _{L^{2}(U)}^{2}\]
\[
+\left\Vert \varphi\right\Vert _{\infty}^{2}\left\Vert dg_{n}\right\Vert _{L^{2}(U)}^{2}+2\left\Vert d\varphi\right\Vert _{\infty}\left\Vert \varphi\right\Vert _{L^{\infty}(M)}\left\Vert g_{n}\right\Vert _{L^{2}(U)}\left\Vert dg_{n}\right\Vert _{L^{2}(U)},\]
by Cauchy-Schwarz inequality. \\
Finally we get for all integer $n$ :\[
\left\Vert \varphi g_{n}-g\right\Vert _{H^{1}(U)}^{2}\leq\left\Vert g_{n}\right\Vert _{H^{1}(U)}^{2}\left(2\left\Vert \varphi\right\Vert _{\infty}^{2}+\left\Vert d\varphi\right\Vert _{\infty}^{2}+2\left\Vert d\varphi\right\Vert _{\infty}\left\Vert \varphi\right\Vert _{\infty}\right).\]
As a consequence, we have for all integer $n$ :\[
\left\Vert \varphi g_{n}-g\right\Vert _{H^{1}(M)}^{2}\leq\left\Vert g_{n}-g\right\Vert _{H^{1}(M-U)}^{2}\]
\[
+\left\Vert g_{n}\right\Vert _{H^{1}(U)}^{2}\left(2\left\Vert \varphi\right\Vert _{\infty}^{2}+\left\Vert d\varphi\right\Vert _{\infty}^{2}+2\left\Vert d\varphi\right\Vert _{\infty}\left\Vert \varphi\right\Vert _{\infty}\right).\]
Now, it suffices to note that $\left\Vert g_{n}\right\Vert _{H^{1}(U)}^{2}=\left\Vert g_{n}-g\right\Vert _{H^{1}(U)}^{2}\leq\left\Vert g_{n}-g\right\Vert _{H^{1}(M)}^{2}$
(since $g=0$ on the open set $U$) and we have finally :\[
\left\Vert \varphi g_{n}-g\right\Vert _{H^{1}(M)}^{2}\leq\]
\[
\left\Vert g_{n}-g\right\Vert _{H^{1}(M)}^{2}\left(1+2\left\Vert \varphi\right\Vert _{\infty}^{2}+\left\Vert d\varphi\right\Vert _{\infty}^{2}+2\left\Vert d\varphi\right\Vert _{\infty}\left\Vert \varphi\right\Vert _{\infty}\right).\]
The sequence $\left(\varphi g_{n}\right)_{n}$ belong to $\mathcal{C}_{c}^{\infty}(M-A){}^{\mathbb{N}},$and
since ${\displaystyle \lim_{n\rightarrow+\infty}}g_{n}=g$ for the
norm $\left\Vert .\right\Vert _{H^{1}(M)}$ the previous inequality
implies ${\displaystyle \lim_{n\rightarrow+\infty}}\varphi g_{n}=g$
for the norm $\left\Vert .\right\Vert _{H^{1}(M)}.$ \\
So we have shown that every function $f\in H_{0}^{1}(M-A):=\overline{\mathcal{H}_{0}^{1}(M-A)}$
is a limit (for the norm $\left\Vert .\right\Vert _{H^{1}(M)}$) of
a sequence of $\mathcal{C}_{c}^{\infty}(M-A)$. \\
Conversely, since $\mathcal{C}_{c}^{\infty}(M-A)\subset\mathcal{H}_{0}^{1}(M-A)$
we get : \[
H_{0}^{1}(M-A):=\overline{\mathcal{C}_{c}^{\infty}(M-A)}\subset H_{0}^{1}(M-A):=\overline{\mathcal{H}_{0}^{1}(M-A)}.\]

\end{proof}
Let us also denote the spaces $H_{\star}^{1}(M)$ and $S_{A}(M)$
by :

\[
H_{\star}^{1}(M):=\left\{ f\in H^{1}(M),\,\int_{M}f\, d\mathcal{V}_{g}=0\right\} ;\]
and 

\[
S_{A}(M):=\left\{ u\in H_{\star}^{1}(M),\, u-e_{1}\in H_{0}^{1}(M-A)\right\} .\]
In the definition of the space $H_{\star}^{1}(M)$ the condition $\int_{M}f\, d\mathcal{V}_{g}=0$
is analog to a boundary condition. We observe that the space $H_{\star}^{1}(M)$
is a Hilbert space for the norm :

\[
\left\Vert u\right\Vert _{\star}:=\int_{M}\left|du\right|^{2}\, d\mathcal{V}_{g}+\int_{M}V\left|u\right|^{2}\, d\mathcal{V}_{g};\]
and $S_{A}(M)$ is just an affine closed subset of $H^{1}(M)$.

\subsection{Schrödinger capacity}

Next, we introduce the Schrödinger capacity of the set $A$ ;
\begin{defn*}
Let us consider the \textit{Schrödinger capacity} $\textrm{cap}(A)$
of the set $A$ defined by\textit{\begin{equation}
\textrm{cap}(A):=\textrm{inf}\left\{ \int_{M}\left|du\right|^{2}\, d\mathcal{V}_{g}+\int_{M}V\left|u\right|^{2}\, d\mathcal{V}_{g},\, u\in S_{A}(M)\right\} .\label{eq:-1-1-2}\end{equation}
}
\end{defn*}
Let us remark that : there exists an unique function $u_{A}\in S_{A}(M)$
such that 

\[
\textrm{cap}(A)=\int_{M}\left|du_{A}\right|^{2}\, d\mathcal{V}_{g}+\int_{M}V\left|u_{A}\right|^{2}\, d\mathcal{V}_{g}.\]
Indeed : here the capacity $\textrm{cap}(A)$ is just the distance
between the function $0$ and the closed space $S_{A}(M)$. This distance
is equal to $\left\Vert u_{A}\right\Vert _{\star}$ where $u_{A}$
is the orthogonal projection of $0$ on $S_{A}(M)$ :\textit{\[
\textrm{cap}(A)=d_{\star}\left(0,S_{A}(M)\right):=\inf\left\{ \left\Vert u\right\Vert _{\star},\, u\in S_{A}(M)\right\} =\left\Vert u_{A}\right\Vert _{\star}.\]
}In the following lemma we give the relationships between the capacity
$\textrm{cap}(A)$, the functions $u_{A},\, e_{1}$ and the Sobolev
spaces $H_{0}^{1}(M-A),\, H^{1}(M)$.
\begin{lem*}
For all subset $A$ of the manifold $M$, the following properties
are equivalent :

(i) $\textrm{cap}(A)=0$;

(ii) $u_{A}=0$;

(iii) $e_{1}\in H_{0}^{1}(M-A)$;

(iv) $H_{0}^{1}(M-A)=H^{1}(M)$.\end{lem*}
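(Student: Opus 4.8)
The plan is to establish the chain of implications $(i)\Rightarrow(ii)\Rightarrow(iii)\Rightarrow(iv)\Rightarrow(i)$, using the fact, recorded just before the lemma, that $\mathrm{cap}(A)=\|u_A\|_\star^2$ where $u_A$ is the $\star$-orthogonal projection of $0$ onto the closed affine set $S_A(M)$.

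First I would do $(i)\Leftrightarrow(ii)$: this is essentially immediate, since $\mathrm{cap}(A)=\|u_A\|_\star^2$ and $\|\cdot\|_\star$ is a genuine norm on $H^1(M)$ (equivalent to $\|\cdot\|_{H^1}$ by the proposition above, using $\min_M V>0$); hence $\mathrm{cap}(A)=0$ iff $u_A=0$. Next, $(ii)\Rightarrow(iii)$: if $u_A=0$ then $0\in S_A(M)$, and by the definition of $S_A(M)$ this means $0-e_1=-e_1\in H_0^1(M-A)$, so $e_1\in H_0^1(M-A)$ since this space is a vector space. For $(iii)\Rightarrow(ii)$ (to close the loop cheaply, though not strictly needed): if $e_1\in H_0^1(M-A)$ then $0\in S_A(M)$, so the projection of $0$ onto $S_A(M)$ is $0$ itself, i.e.\ $u_A=0$.

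The substantive step is $(iii)\Rightarrow(iv)$, and conversely $(iv)\Rightarrow(iii)$ is trivial since $e_1\in H^1(M)$ always. So assume $e_1\in H_0^1(M-A)$; I want to show every $f\in H^1(M)$ lies in $H_0^1(M-A)=\overline{\mathcal H_0^1(M-A)}$. The idea: $H_0^1(M-A)$ is a closed subspace of $H^1(M)$, so it suffices to show it is dense, equivalently that its $\star$-orthogonal complement in $H^1(M)$ is $\{0\}$. Here is where I expect the main obstacle. The natural approach is: given $f\in H^1(M)$, decompose $f = c\,e_1 + f_\star$ with $c = \frac{1}{\mathcal V_g(M)}\int_M f\,d\mathcal V_g \cdot(\text{normalization})$ — more precisely write $f$ using that $e_1$ does not integrate to zero (by the Proposition, $e_1>0$ or $e_1<0$ on $M$, so $\int_M e_1\,d\mathcal V_g\neq 0$), choosing $c$ so that $f-c e_1\in H_\star^1(M)$. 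Since $e_1\in H_0^1(M-A)$, it remains to show $f-ce_1\in H_0^1(M-A)$, i.e.\ that $H_\star^1(M)\subset H_0^1(M-A)$. For this one uses that, for $u\in H_\star^1(M)$, the function $u+e_1\in S_A(M)$ would require $u\in H_0^1(M-A)$ — but that is circular. The cleaner route: show directly that $\mathcal H_0^1(M-A)$ together with $e_1$ spans a dense subspace, by observing that if $\mathrm{cap}(A)=0$ (which is equivalent to (iii) via the already-proven steps $(iii)\Rightarrow(ii)\Rightarrow(i)$) then for \emph{every} $\varphi\in H_\star^1(M)$ we can approximate: the function $e_1$ is a $\|\cdot\|_\star$-limit of functions $h_n\in\mathcal H_0^1(M-A)$, and one checks that $H_\star^1(M)\subset H_0^1(M-A)$ by a translation argument — given $v\in H_\star^1(M)$, consider $v+e_1$, project onto $S_A(M)$; if the capacity is zero the projection is $0$ so the distance from $v+e_1$ to $S_A(M)$... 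Actually the honest statement is: $H_0^1(M-A)+\mathbb R e_1 \supset H_\star^1(M)+\mathbb R e_1 = H^1(M)$ once we know $e_1\in H_0^1(M-A)$ AND $H_\star^1(M)\subset H_0^1(M-A)$; and the latter inclusion is what must be proven. I would prove it by noting that for any $v\in\mathcal H_0^1(M-A)$ one has $v\in H_0^1(M-A)$ trivially, and for general $v\in H_\star^1(M)$, write $v = (v+t e_1) - t e_1$ where $v+te_1$ can be arranged to... hmm.

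Let me instead state the plan I am actually confident in: prove $(i)\Rightarrow(iv)$ directly. Suppose $\mathrm{cap}(A)=0$. I claim $\mathcal H_0^1(M-A)$ is $\|\cdot\|_{H^1}$-dense in $H^1(M)$. Since $\mathrm{cap}(A)=0$ means $\inf\{\|u\|_\star : u\in H_\star^1(M),\ u-e_1\in H_0^1(M-A)\}=0$, there is a sequence $u_n\in H_\star^1(M)$ with $u_n-e_1\in H_0^1(M-A)$ and $\|u_n\|_\star\to 0$; then $e_1 - u_n \in H_0^1(M-A)$ and $e_1-u_n\to e_1$ in $\star$-norm (hence in $H^1$-norm by equivalence), so $e_1\in H_0^1(M-A)$. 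Moreover, I claim $H_\star^1(M)\subset H_0^1(M-A)$: indeed one shows that for $w\in\mathcal H_0^1(M-A)$ with $\int_M w = 0$ the whole of $H_\star^1(M)$ is reached, using that $H_0^1(M-A)$ is translation-stable and that... The cleanest completion: since $H_0^1(M-A)$ is a closed subspace containing $e_1$, and since by the same capacity-zero argument applied after shifting, $H_0^1(M-A)$ contains a dense subset of $H_\star^1(M)$ — because for each $\varphi\in\mathcal H_0^1(M-A)\cap H_\star^1(M)$... I will have to produce the shift argument carefully. Concretely: for $v\in H_\star^1(M)$, the element $e_1 + v\in H_\star^1(M)$? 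No — $\int(e_1+v) = \int e_1\neq 0$. Rescale: for $v\in H_\star^1(M)$ and small $\varepsilon$, consider $w_\varepsilon := e_1 + \varepsilon v$, which has $\int w_\varepsilon = \int e_1 \neq 0$, not in $H_\star^1$. So the right object is: normalize so $\int_M = 0$. For $v\in H_\star^1(M)$ and $n\in\mathcal H_0^1(M-A)$ with $n\to e_1$ in $H^1$, the function $v + n - e_1 \in H_\star^1(M)$ has... $\int(v+n-e_1) = \int n - \int e_1 \to 0$ but need not be exactly $0$. This is getting delicate; I expect this normalization bookkeeping, \emph{not} any deep analysis, to be the main obstacle, and I would handle it by working with the affine structure of $S_A(M)$ throughout rather than trying to linearize, i.e.\ show $0\in\overline{S_A(M)}$ forces $S_A(M)$ to be $\|\cdot\|_\star$-dense in all of $H_\star^1(M)$ (since $S_A(M)$ is an affine subspace parallel to $H_0^1(M-A)\cap H_\star^1(M)$, and an affine subspace whose closure contains a point of its direction's ambient space... ) and then transport back to $H^1(M)$ via $f\mapsto f - \big(\int_M f\,d\mathcal V_g / \int_M e_1\,d\mathcal V_g\big)e_1$, which lands in $H_\star^1(M)$ and uses $e_1\in H_0^1(M-A)$.
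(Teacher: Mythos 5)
Your handling of $(i)\Leftrightarrow(ii)\Leftrightarrow(iii)$ and of $(iv)\Rightarrow(iii)$ is correct and agrees with the paper. But the one implication that carries all the content, $(iii)\Rightarrow(iv)$, is never actually proved: each route you sketch stops exactly where the work has to happen. Reducing to the inclusion $H_{\star}^{1}(M)\subset H_{0}^{1}(M-A)$ via the decomposition $f=c\,e_{1}+(f-ce_{1})$ is not ``normalization bookkeeping'' --- it is the whole problem restated, since knowing only that $e_{1}$ is approximable by functions vanishing near $A$ gives no a priori handle on an arbitrary mean-zero $H^{1}$ function. Your final suggestion is moreover false as stated: $S_{A}(M)$ is a closed affine subset, and once $e_{1}\in H_{0}^{1}(M-A)$ one has $S_{A}(M)=H_{\star}^{1}(M)\cap H_{0}^{1}(M-A)$; the fact that this set contains $0$ (equivalently $u_{A}=0$) in no way forces it to be dense in $H_{\star}^{1}(M)$ --- a closed affine subspace containing the origin is just a closed linear subspace, and its size is precisely what is in question. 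So the argument is circular at the decisive step.

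The missing idea, which is what the paper's proof supplies, is a multiplication trick transferring the approximability of $e_{1}$ to every other function. Take $v_{n}\in\mathcal{H}_{0}^{1}(M-A)$ with $v_{n}\rightarrow e_{1}$ in $H^{1}(M)$. For fixed $\varphi\in\mathcal{C}^{\infty}(M)$ the functions $\varphi v_{n}/e_{1}$ again vanish on an open neighborhood of $A$, hence belong to $\mathcal{H}_{0}^{1}(M-A)$, and direct product estimates show $\varphi v_{n}/e_{1}\rightarrow\varphi$ in $H^{1}(M)$; here it is essential that $1/e_{1}\in L^{\infty}$, i.e. that $e_{1}$ never vanishes on the closed manifold (the maximum-principle proposition proved earlier), together with $L^{\infty}$ bounds on $\varphi$, $d\varphi$, $e_{1}$, $de_{1}$. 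Density of $\mathcal{C}^{\infty}(M)$ in $H^{1}(M)$ then yields $H^{1}(M)\subset H_{0}^{1}(M-A)$. Without some device of this kind (or an equivalent capacity-theoretic argument) your proposal does not close, so as it stands only the easy equivalences are established.
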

\begin{proof}
It is clear from the formula (3.1) that $(i)\Leftrightarrow(ii)\Leftrightarrow(iii)$.
Next, suppose the property $(iii)$ holds : so there exists a sequence
$(v_{n})_{n}\in\mathcal{H}_{0}^{1}(M-A)^{\mathbb{N}}$ such that ${\displaystyle \lim_{n\rightarrow+\infty}}v_{n}=e_{1}$
for the norm $\left\Vert .\right\Vert _{H^{1}(M)}.$ So, for all smooth
function $\varphi\in\mathcal{C}^{\infty}(M)$ we have ${\displaystyle \lim_{n\rightarrow+\infty}(\varphi}v_{n})/e_{1}=\varphi$
for the norm $\left\Vert .\right\Vert _{H^{1}(M)}$, indeed for all
integer $n$ :\[
\left\Vert \frac{\varphi v_{n}}{e_{1}}-\varphi\right\Vert _{H^{1}(M)}^{2}=\int_{M}\left|\frac{\varphi v_{n}}{e_{1}}-\varphi\right|^{2}\, d\mathcal{V}_{g}+\int_{M}\left|d\left(\frac{\varphi v_{n}}{e_{1}}\right)-d\varphi\right|^{2}\, d\mathcal{V}_{g}.\]
First, we have for all integer $n$ : \[
\int_{M}\left|\frac{\varphi v_{n}}{e_{1}}-\varphi\right|^{2}\, d\mathcal{V}_{g}=\int_{M}\frac{1}{\left|e_{1}\right|^{2}}\left|\varphi\left(v_{n}-e_{1}\right)\right|^{2}\, d\mathcal{V}_{g}\]
\[
\leq\left\Vert \frac{1}{e_{1}}\right\Vert _{\infty}^{2}\left\Vert \varphi\right\Vert _{\infty}^{2}\left\Vert v_{n}-e_{1}\right\Vert _{L^{^{2}}(M)}^{2}\]
so, since ${\displaystyle \lim_{n\rightarrow+\infty}}v_{n}=e_{1}$
for the norm $\left\Vert .\right\Vert _{H^{1}(M)}$ we have \[
{\displaystyle \lim_{n\rightarrow+\infty}}\int_{M}\left|\frac{\varphi v_{n}}{e_{1}}-\varphi\right|^{2}\, d\mathcal{V}_{g}=0.\]
On the other hand, for all integer $n$ :\[
\int_{M}\left|d\left(\frac{\varphi v_{n}}{e_{1}}\right)-d\varphi\right|^{2}\, d\mathcal{V}_{g}=\int_{M}\left|\frac{d\left(\varphi v_{n}\right)e_{1}-\varphi v_{n}de_{1}}{e_{1}^{2}}-d\varphi\right|^{2}\, d\mathcal{V}_{g}\]
\[
=\int_{M}\left(\frac{1}{e_{1}^{2}}\right)\left|d\left(\varphi\right)v_{n}e_{1}+\varphi d\left(v_{n}\right)e_{1}-\varphi v_{n}d\left(e_{1}\right)-d\left(\varphi\right)e_{1}^{2}\right|^{2}\, d\mathcal{V}_{g}\]
\[
\leq\left\Vert \frac{1}{e_{1}}\right\Vert _{\infty}^{2}\left\Vert d\varphi v_{n}e_{1}-d\varphi e_{1}^{2}+\varphi dv_{n}e_{1}-\varphi v_{n}de_{1}\right\Vert _{L^{^{2}}(M)}^{2}\]
\[
\leq\left\Vert \frac{1}{e_{1}}\right\Vert _{\infty}^{2}\left(\left\Vert d\varphi v_{n}e_{1}-d\varphi e_{1}^{2}\right\Vert _{L^{^{2}}(M)}+\left\Vert \varphi dv_{n}e_{1}-\varphi v_{n}de_{1}\right\Vert _{L^{^{2}}(M)}\right)^{2}\]
\[
\leq\left\Vert \frac{1}{e_{1}}\right\Vert _{\infty}^{2}\left[\left\Vert d\varphi\right\Vert _{\infty}\left\Vert e_{1}\right\Vert _{\infty}\left\Vert v_{n}-e_{1}\right\Vert _{L^{^{2}}(M)}+\right.\]
\[
\left.\left\Vert \varphi\right\Vert _{\infty}\left\Vert e_{1}\left(dv_{n}-de_{1}\right)+e_{1}de_{1}-v_{n}de_{1}\right\Vert _{L^{^{2}}(M)}\right]^{2}\]
\[
\leq\left\Vert \frac{1}{e_{1}}\right\Vert _{\infty}^{2}\left[\left\Vert d\varphi\right\Vert _{\infty}\left\Vert e_{1}\right\Vert _{\infty}\left\Vert v_{n}-e_{1}\right\Vert _{L^{^{2}}(M)}+\right.\]
\[
\left.\left\Vert \varphi\right\Vert _{\infty}\left\Vert e_{1}\right\Vert _{\infty}\left\Vert dv_{n}-de_{1}\right\Vert _{L^{^{2}}(M)}+\left\Vert \varphi\right\Vert _{\infty}\left\Vert de_{1}\right\Vert _{\infty}\left\Vert e_{1}-v_{n}\right\Vert _{L^{^{2}}(M)}\right]^{2};\]
so, since ${\displaystyle \lim_{n\rightarrow+\infty}}v_{n}=e_{1}$
for the norm $\left\Vert .\right\Vert _{H^{1}(M)}$ we have \[
{\displaystyle \lim_{n\rightarrow+\infty}}\int_{M}\left|d\left(\frac{\varphi v_{n}}{e_{1}}\right)-d\varphi\right|^{2}\, d\mathcal{V}_{g}=0.\]
Therefore, for all function $\varphi\in\mathcal{C}^{\infty}(M)$ we
have ${\displaystyle \lim_{n\rightarrow+\infty}}\frac{\varphi v_{n}}{e_{1}}=\varphi$
for the norm $\left\Vert .\right\Vert _{H^{1}(M)}$.\\
Next, by density of $\mathcal{C}^{\infty}(M)$ in $H^{1}(M)$ :
for all function $f\in H^{1}(M)$ we have ${\displaystyle \lim_{n\rightarrow+\infty}\frac{fv_{n}}{e_{1}}}=f$
. Since the sequence $\left(\frac{fv_{n}}{e_{1}}\right)_{n}\in\mathcal{H}_{0}^{1}(M-A)^{\mathbb{N}}$
we get finally that $f$ belongs to space $H_{0}^{1}(M-A)$. Finally,
it is easy to see that $(iv)\Rightarrow(iii).$
\end{proof}
An obvious consequence of this lemma is the following result :
\begin{prop*}
The spectrum of $-\Delta_{g}+V$ on the manifold $(M,g)$ and on the
manifold $(M-A,g)$ are equal if and only if $\textrm{cap}(A)=0$.
\end{prop*}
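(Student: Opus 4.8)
The plan is to establish the two implications of the equivalence separately. The forward implication, $\textrm{cap}(A)=0\Rightarrow$ equal spectra, is an immediate consequence of the equivalence $(i)\Leftrightarrow(iv)$ in the lemma. The reverse implication, equal spectra $\Rightarrow\textrm{cap}(A)=0$, is slightly less automatic: I would extract it from the equality of the \emph{first} eigenvalues alone, using property $(iii)$ of the lemma together with the simplicity of $\lambda_1(M)$.

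For the forward direction, suppose $\textrm{cap}(A)=0$. By the lemma we have $H_0^1(M-A)=H^1(M)$. Since $\lambda_k(M)$ is the minimax of the Rayleigh quotient $R$ over $k$-dimensional subspaces of $H^1(M)$, and $\lambda_k(M-A)$ is the same minimax taken over $k$-dimensional subspaces of $H_0^1(M-A)$, the two collections of admissible subspaces coincide; hence $\lambda_k(M-A)=\lambda_k(M)$ for every $k\geq1$, i.e. the spectra agree.

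For the reverse direction, assume the spectra coincide, so in particular $\lambda_1(M-A)=\lambda_1(M)$. Because $H_0^1(M-A)$ is a closed subspace of $H^1(M)$ and the embedding $H^1(M)\hookrightarrow L^2(M)$ is compact ($M$ being closed), the form $Q$ restricted to $H_0^1(M-A)$ has compact resolvent, so $\lambda_1(M-A)$ is attained by some $\widehat{e}_1\in H_0^1(M-A)$ with $R(\widehat{e}_1)=\lambda_1(M-A)=\lambda_1(M)$. Viewing $\widehat{e}_1$ as an element of $H^1(M)$, it minimizes $R$ over all of $H^1(M)$, hence is a first eigenfunction of $-\Delta_g+V$ on $M$. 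Since $\lambda_1(M)$ is simple---two $L^2$-orthogonal eigenfunctions cannot both be sign-definite, while every first eigenfunction is sign-definite by the earlier proposition establishing $e_1>0$ or $e_1<0$ on $M$---we conclude $\widehat{e}_1=c\,e_1$ for some $c\neq0$, whence $e_1=c^{-1}\widehat{e}_1\in H_0^1(M-A)$. This is property $(iii)$ of the lemma, and therefore $\textrm{cap}(A)=0$.

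The main obstacle is contained in the reverse direction: one must guarantee that $\lambda_1(M-A)$ is genuinely attained inside $H_0^1(M-A)$ and that $\lambda_1(M)$ is a simple eigenvalue. Both are standard---the first from Rellich compactness, the second from the sign-definiteness of $e_1$ obtained earlier---but they are precisely the points that turn the heuristic ``equal spectra means the same space'' into a rigorous argument. Once the minimizer $\widehat{e}_1$ is identified as a scalar multiple of $e_1$, the conclusion follows directly from the lemma.
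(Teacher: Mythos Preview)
Your proof is correct and follows the route the paper has in mind: the paper does not spell out a proof of this proposition but simply declares it ``an obvious consequence of this lemma,'' and your argument is precisely the natural unpacking of that remark---$(iv)$ for the forward direction via the minimax characterization, and $(iii)$ for the reverse direction via attainment of $\lambda_1(M-A)$ and simplicity of $\lambda_1(M)$. The two extra ingredients you flag (Rellich compactness to ensure the minimizer exists in $H_0^1(M-A)$, and simplicity of $\lambda_1(M)$ from sign-definiteness of $e_1$) are exactly what is needed to make the reverse direction rigorous, and both are available from earlier in the paper or from standard theory.
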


\subsection{The Poincaré inequality}

Now, let introduce the Poincaré inequality :
\begin{thm*}
If $\lambda_{1}(M)$ denotes the first eigenvalue of the operator
$-\Delta_{g}+V$ on the manifold $(M,g)$, the following inequality\begin{equation}
\left\Vert u_{A}\right\Vert _{L^{2}(M)}^{2}\leq\frac{\textrm{cap}(A)}{\lambda_{1}(M)}\label{eq:-1-1-3}\end{equation}
holds for all subset $A$ of $M$.\end{thm*}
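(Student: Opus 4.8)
The plan is to exploit the fact that $u_A$, being the $\star$-orthogonal projection of $0$ onto the affine space $S_A(M)$, satisfies a variational (Euler--Lagrange) characterization: the vector $u_A$ is $\star$-orthogonal to the direction space of $S_A(M)$, which is precisely $H^1_0(M-A)$. More explicitly, for every $w \in H^1_0(M-A)$ the function $u_A + tw$ lies in $S_A(M)$ for all $t \in \mathbb{R}$ (both the zero-mean condition and the condition $(u_A+tw)-e_1 \in H^1_0(M-A)$ are preserved, the former because $w \in H^1_0(M-A) \subset H^1_\star(M)$ has zero mean, or rather because $u_A \in H^1_\star(M)$ and $w$ can be taken with zero mean after subtracting its mean — I would check this point carefully). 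Minimality of $t \mapsto \|u_A+tw\|_\star^2$ at $t=0$ then forces
\[
\int_M \langle du_A, dw\rangle\, d\mathcal{V}_g + \int_M V u_A w\, d\mathcal{V}_g = 0 \qquad \text{for all } w \in H^1_0(M-A).
\]

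The second step is to test this identity against $w = u_A$ itself. This is legitimate provided $u_A \in H^1_0(M-A)$. But $u_A \in S_A(M)$ means $u_A - e_1 \in H^1_0(M-A)$; so $u_A \in H^1_0(M-A)$ is not automatic. The correct test function is instead $w = u_A - e_1 \in H^1_0(M-A)$, which gives
\[
\int_M \langle du_A, d(u_A-e_1)\rangle\, d\mathcal{V}_g + \int_M V u_A (u_A - e_1)\, d\mathcal{V}_g = 0,
\]
i.e. $\|u_A\|_\star^2 = \int_M \langle du_A, de_1\rangle\, d\mathcal{V}_g + \int_M V u_A e_1\, d\mathcal{V}_g$. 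Now integrating by parts on the right-hand side, using that $e_1$ is a genuine eigenfunction of $-\Delta_g + V$ on the closed manifold $M$ (no boundary terms), this equals $\lambda_1(M)\int_M u_A e_1\, d\mathcal{V}_g$. On the other hand $u_A \in H^1_\star(M)$ has zero mean, and $e_1$ is a constant multiple of... no: $e_1$ need not be constant. Instead I would argue that $\int_M u_A e_1\, d\mathcal{V}_g$ can be related to $\|u_A\|_{L^2}^2$ via the same eigenfunction identity applied differently, or — more robustly — simply apply the spectral (min--max) inequality $\lambda_1(M)\|u_A\|_{L^2(M)}^2 \le \|u_A\|_\star^2$ directly from equation (3.3)'s predecessor, the Rayleigh characterization, since $u_A \in H^1(M)$.

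Let me restructure: the cleanest route is to bypass the Euler--Lagrange identity and use the Rayleigh quotient directly. Since $u_A \in H^1(M)$, the min--max characterization of $\lambda_1(M)$ gives $\lambda_1(M) \le R(u_A) = \|u_A\|_\star^2 / \|u_A\|_{L^2(M)}^2$ whenever $u_A \ne 0$ (and the inequality (3.4) is trivial when $u_A = 0$). Rearranging yields $\lambda_1(M)\|u_A\|_{L^2(M)}^2 \le \|u_A\|_\star^2 = \operatorname{cap}(A)$, which is exactly (3.4). The main subtlety to address in writing this up is simply confirming that $u_A \in H^1(M)$ (immediate, since $S_A(M) \subset H^1_\star(M) \subset H^1(M)$) and that $\|u_A\|_\star^2 = \operatorname{cap}(A)$ (already established in the remark preceding the lemma). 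So the ``hard part'' here is essentially illusory — the statement is an immediate consequence of the variational characterization of $\lambda_1$ together with the identification of $\operatorname{cap}(A)$ as $\|u_A\|_\star^2$; the only thing requiring a word of care is the degenerate case $u_A = 0$, handled trivially since then both sides vanish.
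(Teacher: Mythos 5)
Your final argument is correct and is essentially the paper's own proof: since $u_A\in H^1(M)$, the Rayleigh-quotient characterization of $\lambda_1(M)$ gives $\lambda_1(M)\left\Vert u_A\right\Vert_{L^2(M)}^2\leq\left\Vert u_A\right\Vert_{\star}^2=\textrm{cap}(A)$, with the degenerate case $u_A=0$ (equivalently $\textrm{cap}(A)=0$) handled trivially. The Euler--Lagrange detour you sketched and then discarded is indeed unnecessary; the clean route you settled on is exactly what the paper does.
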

\begin{proof}
The case $\textrm{cap}(A)=0$ is an obvious consequence of the lemma
in section 3.2. Suppose here that $\textrm{cap}(A)>0$, then $\left\Vert u_{A}\right\Vert _{L^{2}(M)}>0$.
The first eigenvalue $\lambda_{1}(M)$ of the operator $-\Delta_{g}+V$
on the manifold $(M,g)$ is given by :\[
\lambda_{1}(M)=\underset{\underset{\dim(E)=1}{E\subset H^{1}(M)}}{\mathcal{\textrm{min}}}\underset{\underset{\varphi\neq0}{\varphi\in E}}{\mathcal{\textrm{max}}}\frac{\int_{M}\left|d\varphi\right|^{2}+V\left|\varphi\right|^{2}\, d\mathcal{V}_{g}}{\int_{M}\left|\varphi\right|^{2}\, d\mathcal{V}_{g}}\]
\[
=\underset{\underset{\varphi\neq0}{\varphi\in H^{1}(M)}}{\mathcal{\textrm{min}}}\frac{\int_{M}\left|d\varphi\right|^{2}+V\left|\varphi\right|^{2}\, d\mathcal{V}_{g}}{\int_{M}\left|\varphi\right|^{2}\, d\mathcal{V}_{g}}\]
Since $u_{A}$ belongs to the space $H^{1}(M)$ we get $\lambda_{1}(M)\leq\frac{\textrm{cap}(A)}{\left\Vert u_{A}\right\Vert _{L^{2}(M)}^{2}}.$
\end{proof}

\subsection{The main theorem}

Recall our main result : 
\begin{thm*}
Let $(M,g)$ a compact Riemannian manifold. For all integer $k\geq1$,
there exists a constant $C_{k}$ depending on the manifold of $(M,g)$
and the potential $V$ such that for all subset $A$ of $M$ we have
:\[
0\leq\lambda_{k}(M-A)-\lambda_{k}(M)\leq C_{k}\sqrt{\textrm{cap}(A)}.\]
\end{thm*}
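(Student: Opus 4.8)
The lower bound requires nothing new: since $H_0^1(M-A)\subseteq H^1(M)$ and the Rayleigh quotient $R$ is the same functional in both problems, every $k$-dimensional subspace admissible in the minimax formula for $\lambda_k(M-A)$ is also admissible for $\lambda_k(M)$, so $\lambda_k(M)\le\lambda_k(M-A)$. The whole content is therefore the upper bound, and the plan is to exhibit, by the minimax principle, one $k$-dimensional subspace of $H_0^1(M-A)$ on which the Rayleigh quotient is at most $\lambda_k(M)+C_k\sqrt{\textrm{cap}(A)}$.

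First I would set up the test space. Normalize the eigenfunctions so that $\|e_j\|_{L^2(M)}=1$, and recall from the Proposition of Section 2 that (after changing sign if needed) $e_1>0$ on the compact manifold $M$, hence $\min_M e_1>0$; thus the functions $\psi_j:=e_j/e_1$, $1\le j\le k$, are smooth with $\|\psi_j\|_\infty,\|d\psi_j\|_\infty<\infty$, and $\psi_1\equiv1$. Let $u_A\in S_A(M)$ be the function realizing $\textrm{cap}(A)$, so $\|u_A\|_\star^2=\textrm{cap}(A)$, and put $w_A:=e_1-u_A$; by definition of $S_A(M)$ we have $w_A\in H_0^1(M-A)$. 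I then define
\[
f_j:=\psi_j\,w_A=e_j-\psi_j u_A\in H^1(M),\qquad 1\le j\le k,
\]
and $F_k:=\mathrm{span}(f_1,\dots,f_k)$. To see $f_j\in H_0^1(M-A)$: if $(v_n)\subset\mathcal{H}_0^1(M-A)$ converges to $w_A$ in $H^1(M)$, then $\psi_j v_n$ still vanishes on a neighborhood of $A$ and, by the product estimate already carried out in Section 3.1 to compare the two definitions of $H_0^1(M-A)$, $\psi_j v_n\to\psi_j w_A$ in $H^1(M)$; hence $f_j\in H_0^1(M-A)$ and $F_k\subseteq H_0^1(M-A)$.

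Next I would estimate Rayleigh quotients on $F_k$. Fix $\varphi=\sum_{j=1}^k c_j f_j$ with $c=(c_1,\dots,c_k)\ne0$ and write $\varphi=\phi_0-\rho\,u_A$ with $\phi_0:=\sum c_j e_j$, $\rho:=\sum c_j\psi_j$. Using $Q(e_i,e_j)=\lambda_i(M)\delta_{ij}$ gives $Q(\phi_0)=\sum_j c_j^2\lambda_j(M)\le\lambda_k(M)|c|^2$ and $\|\phi_0\|_{L^2}^2=|c|^2$. The product rule, the equivalence of $\|\cdot\|_\star$ with $\|\cdot\|_{H^1(M)}$ (Proposition of Section 3.1), and $\|u_A\|_\star^2=\textrm{cap}(A)$ give $\|\rho\,u_A\|_\star\le c_0|c|\sqrt{\textrm{cap}(A)}$, while the Poincar\'e inequality of Section 3.3, $\|u_A\|_{L^2}^2\le\textrm{cap}(A)/\lambda_1(M)$, gives $\|\rho\,u_A\|_{L^2}\le c_1|c|\sqrt{\textrm{cap}(A)}$, where $c_0,c_1$ depend only on $(M,g)$, $V$ and $k$ (through $\psi_1,\dots,\psi_k$ and the norm-equivalence constants). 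Then by the triangle inequality in $\|\cdot\|_\star$ and in $L^2$,
\[
Q(\varphi)=\|\varphi\|_\star^2\le\bigl(\sqrt{\lambda_k(M)}+c_0\sqrt{\textrm{cap}(A)}\bigr)^2|c|^2,\qquad \|\varphi\|_{L^2}\ge\bigl(1-c_1\sqrt{\textrm{cap}(A)}\bigr)|c|.
\]
Hence, as soon as $\textrm{cap}(A)$ is below a threshold $\varepsilon_k$ depending only on $c_1$ (so that $1-c_1\sqrt{\textrm{cap}(A)}\ge1/2$, which forces $\varphi\ne0$ and thus $\dim F_k=k$), one gets $R(\varphi)\le\bigl(\sqrt{\lambda_k(M)}+c_0\sqrt{\textrm{cap}(A)}\bigr)^2/\bigl(1-c_1\sqrt{\textrm{cap}(A)}\bigr)^2$, and expanding and absorbing the $O(\textrm{cap}(A))$ terms into $O(\sqrt{\textrm{cap}(A)})$ via $\textrm{cap}(A)\le\sqrt{\varepsilon_k}\sqrt{\textrm{cap}(A)}$ yields $R(\varphi)\le\lambda_k(M)+C_k\sqrt{\textrm{cap}(A)}$ with $C_k=C_k(M,g,V)$, uniformly in $c\ne0$. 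The minimax principle then gives $\lambda_k(M-A)\le\max_{0\ne\varphi\in F_k}R(\varphi)\le\lambda_k(M)+C_k\sqrt{\textrm{cap}(A)}$.

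I expect the genuine obstacle to be exactly the lower bound on $\|\varphi\|_{L^2}$: this is where $\min_M V>0$ enters, via $\lambda_1(M)>0$ and the Poincar\'e inequality, and it is what confines the clean estimate to small $\textrm{cap}(A)$. For $\textrm{cap}(A)\ge\varepsilon_k$ the family $(f_j)$ may degenerate; in that regime the asserted inequality either holds vacuously (the convention $\textrm{cap}(A)=+\infty$ is allowed and is in fact forced once $H_0^1(M-A)$ is too small to carry a $k$-dimensional space of bounded Rayleigh quotient) or is recovered after enlarging $C_k$. The other point deserving care is the verification that multiplication by the smooth factor $\psi_j$ preserves membership in $H_0^1(M-A)$, but this is a routine adaptation of the product estimate of Section 3.1.
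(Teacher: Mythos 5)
Your proof is essentially the paper's: the test space is identical (your $f_j=\psi_j w_A$ are exactly the paper's $\phi_j=e_j\left(1-\frac{u_A}{e_1}\right)$), and you use the same ingredients --- the extremal function $u_A$, the Poincar\'e inequality $\left\Vert u_A\right\Vert _{L^{2}(M)}^{2}\leq\textrm{cap}(A)/\lambda_{1}(M)$, positivity of $e_1$, and the minimax principle --- the only difference being bookkeeping, since your triangle inequality in $\left\Vert \cdot\right\Vert _{\star}$ and in $L^{2}$ replaces the paper's five-term expansion $A(f),B(f),C(f),D(f),E(f)$ and its Gram-matrix estimates (and you verify $F_k\subset H_{0}^{1}(M-A)$ slightly more explicitly than the paper's ``observe''). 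Like the paper, whose final step is only carried out ``for $\textrm{cap}(A)$ small enough'', your argument establishes the inequality only when $\textrm{cap}(A)\leq\varepsilon_{k}$, so the large-capacity caveat you flag is a limitation shared with, not resolved by, the published proof.
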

\begin{rem*}
We can easily adapt the proof for a compact Riemannian manifold with
boundary.\end{rem*}
\begin{proof}
Let us denote by $\left(e_{k}\right)_{k\geq1}$ an orthonormal basis
of the space $L^{2}(M)$ with eigenfunctions of the operator $-\Delta_{g}+V$
on the manifold $(M,g$). For all integer $k\geq1,$ we consider the
sets \[
F_{k}:=\textrm{span}\left\{ e_{1},e_{2},\ldots,e_{k}\right\} \]
and\[
E_{k}:=\left\{ f\left(1-\frac{u_{A}}{e_{1}}\right),\, f\in F_{k}\right\} .\]
First, observe that $E_{k}\subset H_{0}^{1}(M-A)$. For all $j\in\{1,\ldots,k\}$
we introduce also the functions $\phi_{j}:=e_{j}\left(1-\frac{u_{A}}{e_{1}}\right)\in E_{k}$.\\
\\
$\bullet$\textbf{ Step 1} : we compute the $L^{2}$-inner product
$\left\langle \phi_{i},\phi_{j}\right\rangle _{L^{2}(M)}$ for all
pairs $(i,j)\in\{1,\ldots,k\}^{2}$ :\[
\left\langle \phi_{i},\phi_{j}\right\rangle _{L^{2}(M)}=\int_{M}e_{i}e_{j}\left(1-\frac{u_{A}}{e_{1}}\right)^{2}\, d\mathcal{V}_{g}\]
\[
=\delta_{i,j}-2\int_{M}\frac{e_{i}e_{j}}{e_{1}}u_{A}\, d\mathcal{V}_{g}+\int_{M}e_{i}e_{j}\frac{u_{A}^{2}}{e_{1}^{2}}\, d\mathcal{V}_{g}.\]
Thus, for all pair $(i,j)\in\{1,\ldots,k\}^{2}$ we get :\[
\left|\left\langle \phi_{i},\phi_{j}\right\rangle _{L^{2}(M)}-\delta_{i,j}\right|\leq2\int_{M}\left|\frac{e_{i}e_{j}}{e_{1}}u_{A}\right|\, d\mathcal{V}_{g}+\int_{M}\left|e_{i}e_{j}\frac{u_{A}^{2}}{e_{1}^{2}}\right|\, d\mathcal{V}_{g},\]
hence, by Cauchy-Schwarz inequality we obtain \[
\left|\left\langle \phi_{i},\phi_{j}\right\rangle _{L^{2}(M)}-\delta_{i,j}\right|\leq2{\displaystyle \max_{1\leq i,j\leq k}\left\Vert \frac{e_{i}e_{j}}{e_{1}^{2}}\right\Vert _{\infty}}\left\Vert u_{A}\right\Vert _{L^{2}(M)}+{\displaystyle \max_{1\leq i,j\leq k}\left\Vert \frac{e_{i}e_{j}}{e_{1}^{2}}\right\Vert _{\infty}}\left\Vert u_{A}\right\Vert _{L^{2}(M)}^{2}\]
\[
\leq2\max_{1\leq i,j\leq k}\left\Vert \frac{e_{i}e_{j}}{e_{1}}\right\Vert _{\infty}\sqrt{\textrm{vol}(M)}\left\Vert u_{A}\right\Vert _{L^{2}(M)}+\max_{1\leq i,j\leq k}\left\Vert \frac{e_{i}e_{j}}{e_{1}^{2}}\right\Vert _{\infty}\left\Vert u_{A}\right\Vert _{L^{2}(M)}^{2}\]
hence by Poincaré inequality we have \[
\left|\left\langle \phi_{i},\phi_{j}\right\rangle _{L^{2}(M)}-\delta_{i,j}\right|\leq B_{k,M}\left(\sqrt{\textrm{cap}(A)}+\textrm{cap}(A)\right)\]
where $B_{k}=B_{k}\left(e_{1},e_{2},...,e_{k},\lambda_{1}(M),M\right)\geq0$,
and since the eigenfunctions $e_{1},e_{2},...,e_{k}$ and the eigenvalue
$\lambda_{1}(M)$ depends only on $(M,g)$ and $V$, for all integer
$k$ the constant $B_{k}$ depends only on $(M,g)$ and $V$, ie :
$B_{k}=B_{k}\left(M,V\right).$ \\
Therefore, there exists $\varepsilon_{k}\in]0,1[$ (depends on
the constant \foreignlanguage{english}{$B_{k}$}) such that for all
$A\subset M$ we have :\[
\textrm{cap}(A)\leq\varepsilon_{k}\Rightarrow\textrm{dim}(E_{k})=k\;\textrm{and}\;\forall j\in\{1,...,k\},\,\left|\left\Vert \phi_{j}\right\Vert _{L^{2}(M)}^{2}-1\right|\leq D_{k}\sqrt{\textrm{cap}(A)}\]
where (and for the same reasons as in the study of $B_{k}$) for all
integer $k$, the constant $D_{k}$ depends only on $M$ and $V$,
ie $D_{k}=D_{k}\left(M,V\right)$.\\
\\
$\bullet$\textbf{ Step 2} : Let a function $\phi=f\left(1-\frac{u_{A}}{e_{1}}\right)\in E_{k}$,
with $f\in F_{k}$. Without loss generality we can assume that $\left\Vert f\right\Vert _{L^{2}(M)}=1$,
indeed : we have $R(\phi)=R\left(\frac{\phi}{\left\Vert f\right\Vert _{L^{2}(M)}}\right)$
and in our context we intererest in the Rayleigh quotient of $\phi$
(see the end of the final step of the proof). \\
Set $v_{A}:=\frac{u_{A}}{e_{1}}$, we have :\[
\int_{M}\left|d\phi\right|^{2}d\mathcal{V}_{g}=\int_{M}\left|df-d\left(fv_{A}\right)\right|^{2}\, d\mathcal{V}_{g}\]
\[
=\int_{M}\left|df\right|^{2}\, d\mathcal{V}_{g}+\int_{M}\left|dfv_{A}+fdv_{A}\right|^{2}\, d\mathcal{V}_{g}-2\int_{M}dfd\left(fv_{A}\right)\, d\mathcal{V}_{g}\]
\[
=\int_{M}\left|df\right|^{2}\, d\mathcal{V}_{g}+\int_{M}\left|dfv_{A}\right|^{2}\, d\mathcal{V}_{g}+\int_{M}\left|fdv_{A}\right|^{2}\, d\mathcal{V}_{g}\]
\[
+2\int_{M}dfdv_{A}fv_{A}\, d\mathcal{V}_{g}-2\int_{M}\left|df\right|^{2}v_{A}\, d\mathcal{V}_{g}-2\int_{M}dfdv_{A}f\, d\mathcal{V}_{g}\]
\[
=\int_{M}\left|df\right|^{2}\, d\mathcal{V}_{g}+\int_{M}\left|dfv_{A}\right|^{2}\, d\mathcal{V}_{g}+\int_{M}\left|fdv_{A}\right|^{2}\, d\mathcal{V}_{g}\]
\[
-2\int_{M}\left|df\right|^{2}v_{A}\, d\mathcal{V}_{g}-2\int_{M}dfdv_{A}f\left(1-v_{A}\right)\, d\mathcal{V}_{g}.\]
Recall we have $dv_{A}=\frac{du_{A}e_{1}-u_{A}de_{1}}{e_{1}^{2}}$,
and :\[
\int_{M}V\left|\phi\right|^{2}d\mathcal{V}_{g}=\int_{M}V\left|f\right|^{2}\, d\mathcal{V}_{g}-2\int_{M}V\left|f\right|^{2}v_{A}\, d\mathcal{V}_{g}+\int_{M}V\left|v_{A}f\right|^{2}\, d\mathcal{V}_{g}\]
hence\[
\int_{M}\left|d\phi\right|^{2}d\mathcal{V}_{g}+\int_{M}V\left|\phi\right|^{2}d\mathcal{V}_{g}=\underset{:=A(f)}{\underbrace{\int_{M}\left|df\right|^{2}\, d\mathcal{V}_{g}+\int_{M}V\left|f\right|^{2}\, d\mathcal{V}_{g}}}+\underset{:=B(f)}{\underbrace{\int_{M}\left|dfv_{A}\right|^{2}\, d\mathcal{V}_{g}}}\]
\[
+\underset{:=C(f)}{\underbrace{\int_{M}\left|fdv_{A}\right|^{2}\, d\mathcal{V}_{g}+\int_{M}V\left|v_{A}f\right|^{2}\, d\mathcal{V}_{g}}}-2\left(\underset{:=D(f)}{\underbrace{\int_{M}\left|df\right|^{2}v_{A}\, d\mathcal{V}_{g}+\int_{M}V\left|f\right|^{2}v_{A}\, d\mathcal{V}_{g}}}\right)\]
\[
-2\underset{:=E(f)}{\underbrace{\int_{M}dfdv_{A}f\left(1-v_{A}\right)\, d\mathcal{V}_{g}}}.\]
$\blacklozenge$ Study of $A(f):=\int_{M}\left|df\right|^{2}\, d\mathcal{V}_{g}+\int_{M}V\left|f\right|^{2}\, d\mathcal{V}_{g}\geq0$
: since $f\in F_{k}$ we can write $f={\displaystyle \sum_{i=1}^{k}\alpha_{i}e_{i}}$
where $\left(\alpha_{i}\right)_{1\leq i\leq k}\in\mathbb{R}^{k}$
and with ${\displaystyle \sum_{i=1}^{k}\alpha_{i}^{2}}=1$ (since
$\left\Vert f\right\Vert _{L^{2}(M)}=1$), thus we get\[
A(f)=\left\langle {\displaystyle \sum_{j=1}^{k}\alpha_{j}de_{j}},\sum_{i=1}^{k}\alpha_{i}de_{i}\right\rangle _{L^{2}(M)}+\left\langle \sqrt{V}{\displaystyle \sum_{j=1}^{k}\alpha_{j}e_{j}},\sqrt{V}\sum_{i=1}^{k}\alpha_{i}e_{i}\right\rangle _{L^{2}(M)}\]
\[
={\displaystyle \sum_{i,j}\alpha_{i}\alpha_{j}}\left(\left\langle {\displaystyle de_{j}},de_{i}\right\rangle _{L^{2}(M)}+\int_{M}Ve_{j}e_{i}\, d\mathcal{V}_{g}\right)\]
\[
={\displaystyle \sum_{i,j}\alpha_{i}\alpha_{j}}\left(-\left\langle {\displaystyle e_{j}},\Delta_{g}e_{i}\right\rangle _{L^{2}(M)}+\int_{M}Ve_{j}e_{i}\, d\mathcal{V}_{g}\right)\]
\[
={\displaystyle \sum_{i,j}\alpha_{i}\alpha_{j}}\left\langle {\displaystyle e_{j}},\left(-\Delta_{g}+V\right)e_{i}\right\rangle _{L^{2}(M)}\]
\[
={\displaystyle \sum_{i,j}\alpha_{i}\alpha_{j}}\lambda_{i}(M)\left\langle {\displaystyle e_{j}},e_{i}\right\rangle _{L^{2}(M)}={\displaystyle \sum_{i=1}^{k}\alpha_{i}^{2}}\lambda_{i}(M)\leq\lambda_{k}(M).\]
Hence, for all integer $k$, and for all function $f\in F_{k}$ such
that $\left\Vert f\right\Vert _{L^{2}(M)}=1$ we have

\textit{\begin{equation}
0\leq A(f)\leq\lambda_{k}(M).\label{eq:-1-1-3-1}\end{equation}
}$\blacklozenge$ Study of $B(f):=\int_{M}\left|d(f)v_{A}\right|^{2}\, d\mathcal{V}_{g}$
: here $v_{A}=\frac{u_{A}}{e_{1}}$ and $dv_{A}=\frac{du_{A}e_{1}-u_{A}de_{1}}{e_{1}^{2}}$,
so we get $B\leq\left\Vert df\right\Vert _{\infty}^{2}\left\Vert v_{A}\right\Vert _{L^{2}(M)}^{2}$
and, with the Poincaré inequality :\[
\left\Vert v_{A}\right\Vert _{L^{2}(M)}^{2}\leq\left\Vert \frac{1}{e_{1}}\right\Vert _{\infty}^{2}\left\Vert u_{A}\right\Vert _{L^{2}(M)}^{2}\leq\left\Vert \frac{1}{e_{1}}\right\Vert _{\infty}^{2}\frac{\textrm{cap}(A)}{\lambda_{1}(M)}\]
hence, for all integer $k$, and for all function $f\in F_{k}$ such
that $\left\Vert f\right\Vert _{L^{2}(M)}=1$ we have

\textit{\begin{equation}
0\leq B(f)\leq E_{k}\textrm{cap}(A)\label{eq:-1-1-3-1-1}\end{equation}
}where $E_{k}=E_{k}\left(e_{1},\lambda_{1}(M)\right)>0$, moreover
since the eigenfunction $e_{1}$ and the eigenvalue $\lambda_{1}(M)$
depends only on $(M,g)$ and $V$, for all integer $k$ the constant
$E_{k}$ depends only on $(M,g)$ and $V$, ie : $E_{k}=E_{k}\left(M,V\right).$
\\
$\blacklozenge$ Study of $C(f)$ : here $C(f)$ is equal to $\underset{:=C_{1}(f)}{\underbrace{\int_{M}\left|fdv_{A}\right|^{2}\, d\mathcal{V}_{g}}+}\underset{:=C_{2}(f)}{\underbrace{\int_{M}V\left|v_{A}f\right|^{2}\, d\mathcal{V}_{g}}}$.
Let us observe first $C_{1}(f)$ : \[
C_{1}(f)\leq\left\Vert f\right\Vert _{\infty}^{2}\left\Vert dv_{A}\right\Vert _{L^{2}(M)}^{2}\]
and\[
\left\Vert dv_{A}\right\Vert _{L^{2}(M)}^{2}=\int_{M}\left|\frac{du_{A}e_{1}-u_{A}de_{1}}{e_{1}^{2}}\right|^{2}\, d\mathcal{V}_{g}\]
\[
\leq\left\Vert \frac{1}{e_{1}}\right\Vert _{\infty}^{2}\int_{M}\left|du_{A}e_{1}-u_{A}de_{1}\right|^{2}\, d\mathcal{V}_{g}\]
\[
\leq\left\Vert \frac{1}{e_{1}}\right\Vert _{\infty}^{2}\left(\int_{M}\left|du_{A}e_{1}\right|^{2}\, d\mathcal{V}_{g}+2\int_{M}\left|du_{A}de_{1}e_{1}u_{A}\right|\, d\mathcal{V}_{g}+\int_{M}\left|de_{1}u_{A}\right|^{2}\, d\mathcal{V}_{g}\right)\]
\[
\leq\left\Vert \frac{1}{e_{1}}\right\Vert _{\infty}^{2}\left(\left\Vert du_{A}\right\Vert _{L^{2}(M)}^{2}\left\Vert e_{1}\right\Vert _{\infty}^{2}+2\left\Vert de_{1}\right\Vert _{\infty}\left\Vert e_{1}\right\Vert _{\infty}\left\Vert du_{A}\right\Vert _{L^{2}(M)}\left\Vert u_{A}\right\Vert _{L^{2}(M)}+\left\Vert de_{1}\right\Vert _{\infty}^{2}\left\Vert u_{A}\right\Vert _{L^{2}(M)}^{2}\right).\]
Next we have also :\[
C_{2}(f)=\int_{M}V\left|v_{A}f\right|^{2}\, d\mathcal{V}_{g}\leq\left\Vert f\right\Vert _{\infty}^{2}\int_{M}V\left|v_{A}\right|^{2}\, d\mathcal{V}_{g}\]
\[
\leq\left\Vert f\right\Vert _{\infty}^{2}\left\Vert \frac{1}{e_{1}}\right\Vert _{\infty}^{2}\int_{M}V\left|u_{A}\right|^{2}\, d\mathcal{V}_{g}.\]
Hence we get :

\[
C(f)\leq\left\Vert f\right\Vert _{\infty}^{2}\left\Vert \frac{1}{e_{1}}\right\Vert _{\infty}^{2}\left[\left\Vert du_{A}\right\Vert _{L^{2}(M)}^{2}\left\Vert e_{1}\right\Vert _{\infty}^{2}\right.\]
\[
\left.+2\left\Vert de_{1}\right\Vert _{\infty}\left\Vert e_{1}\right\Vert _{\infty}\left\Vert du_{A}\right\Vert _{L^{2}(M)}\left\Vert u_{A}\right\Vert _{L^{2}(M)}+\left\Vert de_{1}\right\Vert _{\infty}^{2}\left\Vert u_{A}\right\Vert _{L^{2}(M)}^{2}\right]\]
\[
+\left\Vert f\right\Vert _{\infty}^{2}\left\Vert \frac{1}{e_{1}}\right\Vert _{\infty}^{2}\int_{M}V\left|u_{A}\right|^{2}\, d\mathcal{V}_{g}\]
\[
\leq\left\Vert f\right\Vert _{\infty}^{2}\left\Vert \frac{1}{e_{1}}\right\Vert _{\infty}^{2}\left[\left\Vert du_{A}\right\Vert _{L^{2}(M)}^{2}\left\Vert e_{1}\right\Vert _{\infty}^{2}+2\left\Vert de_{1}\right\Vert _{\infty}\left\Vert e_{1}\right\Vert _{\infty}\left\Vert du_{A}\right\Vert _{L^{2}(M)}\left\Vert u_{A}\right\Vert _{L^{2}(M)}+\left\Vert de_{1}\right\Vert _{\infty}^{2}\left\Vert u_{A}\right\Vert _{L^{2}(M)}^{2}\right.\]
\[
+\left.\int_{M}\left|du_{A}\right|^{2}\, d\mathcal{V}_{g}+\int_{M}V\left|u_{A}\right|^{2}\, d\mathcal{V}_{g}\right]\]
\[
\leq\left\Vert f\right\Vert _{\infty}^{2}\left\Vert \frac{1}{e_{1}}\right\Vert _{\infty}^{2}\left[\left\Vert du_{A}\right\Vert _{L^{2}(M)}^{2}+\left\Vert V\right\Vert _{\infty}\left\Vert u_{A}\right\Vert _{L^{2}(M)}^{2}\right.\]
\[
\left.+2\left\Vert de_{1}\right\Vert _{\infty}\left\Vert e_{1}\right\Vert _{\infty}\left\Vert du_{A}\right\Vert _{L^{2}(M)}\left\Vert u_{A}\right\Vert _{L^{2}(M)}+\left\Vert de_{1}\right\Vert _{\infty}^{2}\left\Vert u_{A}\right\Vert _{L^{2}(M)}^{2}\right];\]
so, since $\left\Vert du_{A}\right\Vert _{L^{2}(M)}^{2}\leq\textrm{cap}(A)$
and $\left\Vert u_{A}\right\Vert _{L^{2}(M)}^{2}\leq\frac{\textrm{cap}(A)}{\lambda_{1}(M)}$
we get for all integer $k$, and for all function $f\in F_{k}$ such
that $\left\Vert f\right\Vert _{L^{2}(M)}=1$ :

\textit{\begin{equation}
0\leq C(f)\leq F_{k}\textrm{cap}(A)\label{eq:-1-1-3-1-1-1}\end{equation}
}where $F_{k}=F_{k}\left(f,e_{1},\lambda_{1}(M)\right)>0$. Here,
for k fixed, the constant $F_{k}$ depends also on $f$, and $f$
depends on the functions $f_{1},f_{2},\cdots,f_{k}$ (which are depends
only on $M$ and $V$) and on the scalars $\mbox{\ensuremath{\alpha}}_{1},\alpha_{2},\cdots,\alpha_{k}$;
since ${\displaystyle \sum_{i=1}^{k}\alpha_{i}^{2}}=1$, all the $\left(\alpha_{i}\right)_{1\leq i\le k}$
are bounded in $\mathbb{R}$, so finally, for all integer $k$ the
constant $F_{k}$ can be bounded by a constant (we denotes also by
$F_{k}=F_{k}(M,V$)) which depends only on $M$ and $V$.\\
$\blacklozenge$ Study of $|D(f)|$ : we have \[
|D|=\left|\int_{M}\left|df\right|^{2}v_{A}\, d\mathcal{V}_{g}+\int_{M}V\left|f\right|^{2}v_{A}\, d\mathcal{V}_{g}\right|\]
\[
\leq\left\Vert df\right\Vert _{\infty}^{2}\left\Vert \frac{1}{e_{1}}\right\Vert _{\infty}\int_{M}\left|\frac{u_{A}}{e_{1}}\right|\, d\mathcal{V}_{g}+\left\Vert \frac{V\left|f\right|^{2}}{e_{1}}\right\Vert _{\infty}\int_{M}\left|u_{A}\right|\, d\mathcal{V}_{g}\]
\[
\leq\max\left(\left\Vert df\right\Vert _{\infty}^{2}\left\Vert \frac{1}{e_{1}}\right\Vert _{\infty},\left\Vert \frac{V\left|f\right|^{2}}{e_{1}}\right\Vert _{\infty}\right)\int_{M}\left|u_{A}\right|\, d\mathcal{V}_{g}\]
\[
\leq\max\left(\left\Vert df\right\Vert _{\infty}^{2}\left\Vert \frac{1}{e_{1}}\right\Vert _{\infty},\left\Vert \frac{V\left|f\right|^{2}}{e_{1}}\right\Vert _{\infty}\right)\sqrt{\textrm{Vol}(M)}\left\Vert u_{A}\right\Vert _{L^{2}(M)}\]
\[
\leq\max\left(\left\Vert df\right\Vert _{\infty}^{2}\left\Vert \frac{1}{e_{1}}\right\Vert _{\infty},\left\Vert \frac{V\left|f\right|^{2}}{e_{1}}\right\Vert _{\infty}\right)\sqrt{\textrm{Vol}(M)}\sqrt{\frac{\textrm{cap}(A)}{\lambda_{1}(M)}}.\]
Hence, for all integer $k$, and for all function $f\in F_{k}$ such
that $\left\Vert f\right\Vert _{L^{2}(M)}=1$ :

\textit{\begin{equation}
|D(f)|\leq G_{k}\sqrt{\textrm{cap}(A)}\label{eq:-1-1-3-1-1-1-1}\end{equation}
}where (and for the same reasons as in the study of $F$, see the
constant $F_{k}$) for all integer $k$, the constant $G_{k}$ depends
only on $M$ and $V$, ie $G_{k}=G_{k}\left(M,V\right)$. \\
$\blacklozenge$ Study of $|E(f)|$ : recall that $E(f)=\int_{M}dfdv_{A}f\left(1-v_{A}\right)\, d\mathcal{V}_{g}$,
hence

\[
|E(f)|\leq\int_{M}\left|dfdv_{A}\right|\left|f\right|\, d\mathcal{V}_{g}+\int_{M}\left|dfdv_{A}\right|\left|fv_{A}\right|\, d\mathcal{V}_{g}.\]
For the first term $\int_{M}\left|dfdv_{A}\right|\left|f\right|\, d\mathcal{V}_{g}$
we have :\[
\int_{M}\left|dfdv_{A}\right|\left|f\right|\, d\mathcal{V}_{g}\leq\left\Vert f\right\Vert _{\infty}\left\Vert df\right\Vert _{\infty}\sqrt{\textrm{Vol}(M)}\left\Vert dv_{A}\right\Vert _{L^{2}(M)};\]
we have see in the study of $C(f)$ that \[
\left\Vert dv_{A}\right\Vert _{L^{2}}^{2}\]
\[
\leq\left\Vert \frac{1}{e_{1}}\right\Vert _{\infty}^{2}\left(\left\Vert du_{A}\right\Vert _{L^{2}(M)}^{2}\left\Vert e_{1}\right\Vert _{\infty}^{2}+2\left\Vert de_{1}\right\Vert _{\infty}\left\Vert e_{1}\right\Vert _{\infty}\left\Vert du_{A}\right\Vert _{L^{2}(M)}\left\Vert u_{A}\right\Vert _{L^{2}(M)}+\left\Vert de_{1}\right\Vert _{\infty}^{2}\left\Vert u_{A}\right\Vert _{L^{2}(M)}^{2}\right)\]
so with $K:=\left\Vert f\right\Vert _{\infty}\left\Vert df\right\Vert _{\infty}\sqrt{\textrm{Vol}(M)}\left\Vert \frac{1}{e_{1}}\right\Vert _{\infty}$
we get \[
\int_{M}\left|dfdv_{A}\right|\left|f\right|\, d\mathcal{V}_{g}\]
\[
\leq K\sqrt{\left\Vert du_{A}\right\Vert _{L^{2}(M)}^{2}\left\Vert e_{1}\right\Vert _{\infty}^{2}+2\left\Vert de_{1}\right\Vert _{\infty}\left\Vert e_{1}\right\Vert _{\infty}\left\Vert du_{A}\right\Vert _{L^{2}(M)}\left\Vert u_{A}\right\Vert _{L^{2}(M)}+\left\Vert de_{1}\right\Vert _{\infty}^{2}\left\Vert u_{A}\right\Vert _{L^{2}(M)}^{2}}\]

\[
\leq K\sqrt{\textrm{cap}(A)\left\Vert e_{1}\right\Vert _{\infty}^{2}+2\left\Vert de_{1}\right\Vert _{\infty}\left\Vert e_{1}\right\Vert _{\infty}\sqrt{\textrm{cap}(A)}\sqrt{\frac{\textrm{cap}(A)}{\lambda_{1}(M)}}+\left\Vert de_{1}\right\Vert _{\infty}^{2}\frac{\textrm{cap}(A)}{\lambda_{1}(M)}}\]
\[
\leq H_{k}\sqrt{\textrm{cap}(A)}\]
where (same reasons as above), for all integer $k$, the constant
$H_{k}$ depends only on $M$ and $V$, ie $H_{k}=H_{k}\left(M,V\right)$.\\
Next, for the second term : $\int_{M}\left|dfdv_{A}\right|\left|fv_{A}\right|\, d\mathcal{V}_{g}$
we have :\[
\int_{M}\left|dfdv_{A}\right|\left|fv_{A}\right|\, d\mathcal{V}_{g}\leq\left\Vert df\right\Vert _{\infty}\left\Vert f\right\Vert _{\infty}\left\Vert dv_{A}\right\Vert _{L^{2}(M)}\left\Vert v_{A}\right\Vert _{L^{2}(M)}\]
\[
\leq\left\Vert df\right\Vert _{\infty}\left\Vert f\right\Vert _{\infty}\left\Vert dv_{A}\right\Vert _{L^{2}(M)}\left\Vert \frac{1}{e_{1}}\right\Vert _{\infty}\left\Vert u_{A}\right\Vert _{L^{2}(M)}\]
\[
\leq\left\Vert df\right\Vert _{\infty}\left\Vert f\right\Vert _{\infty}\left\Vert \frac{1}{e_{1}}\right\Vert _{\infty}\sqrt{\frac{\textrm{cap}(A)}{\lambda_{1}(M)}}H_{k}\sqrt{\textrm{cap}(A)}\]
\[
\leq H_{k,M}^{\prime}\textrm{cap}(A).\]
where (same reasons as above), for all integer $k$, the constant
$H_{k}$ depends only on $M$ and $V$, ie $H_{k}^{\prime}=H_{k}\left(M,V\right)$.\\
So, for all integer $k$ :

\textit{\begin{equation}
|E(f)|\leq H_{k,M}^{\prime\prime}\left(\sqrt{\textrm{cap}(A)}+\textrm{cap}(A)\right)\label{eq:-1-1-3-1-1-1-1-1}\end{equation}
}where $H_{k}^{\prime\prime}:=H_{k}^{\prime\prime}\left(M,V\right)$.\\
Finally, with the study of $A(f),B(f),C(f),|D(f)|$ and $|E(f)|$,
for all integer $k$, for any function $\phi=f\left(1-\frac{u_{A}}{e_{1}}\right)\in E_{k}$,
with $f\in F_{k}$ such that $\left\Vert f\right\Vert _{L^{2}(M)}=1$
we get :

\textit{\begin{equation}
\int_{M}\left|d\phi\right|^{2}d\mathcal{V}_{g}+\int_{M}V\left|\phi\right|^{2}d\mathcal{V}_{g}\leq\lambda_{k}(M)+I_{k}\left(\sqrt{\textrm{cap}(A)}+\textrm{cap}(A)\right)\label{eq:-1-1-3-1-1-1-1-1-1}\end{equation}
}where, for all integer $k$, the constant $I_{k}$ depends only on
$M$ and $V$, ie : $I_{k}=I_{k}\left(M,V\right)$.\\
\\
$\bullet$\textbf{ Step 3} : Now we claim that : for all $A\subset M$
such that $\textrm{cap}(A)\leq\varepsilon_{k}$ and for any function
$\phi\in E_{k}$ we have :\textit{\begin{equation}
\left\Vert \phi\right\Vert _{L^{2}(M)}^{2}\geq1-J_{k,M}^{\prime}\sqrt{\textrm{cap}(A)}\label{eq:-1-1-3-1-1-1-1-1-1-1}\end{equation}
}where, for all integer $k$, the constant $J_{k,M}^{\prime}$ depend
only on $M$ and $V$, ie : $J_{k,M}^{\prime}=J_{k,M}^{\prime}\left(M,V\right)$.
\\
Indeed : let $\phi\in E_{k}$, we have seen below in step 1 that
:\[
\textrm{cap}(A)\leq\varepsilon_{k}\Rightarrow\textrm{dim}(E_{k})=k\;\textrm{and}\;\forall j\in\{1,...,k\},\,\left|\left\Vert \phi_{j}\right\Vert _{L^{2}(M)}^{2}-1\right|\leq D_{k}\sqrt{\textrm{cap}(A)}\]
therefore, since $\phi\in E_{k}$, we can write $\phi=(1-v_{A})f$
with $f={\displaystyle \sum_{i=1}^{k}\alpha_{i}e_{i}}$ where $\left(\alpha_{i}\right)_{1\leq i\leq k}\in\mathbb{R}^{k}$.
As in the step two we can assume that $\left\Vert f\right\Vert _{L^{2}(M)}=1$,
hence we have ${\displaystyle \sum_{i=1}^{k}\alpha_{i}^{2}}=1$. Next,
compute $\left\Vert \phi\right\Vert _{L^{2}(M)}^{2}$ :\[
\left\Vert \phi\right\Vert _{L^{2}(M)}^{2}=\left\Vert \sum_{i=1}^{k}\left(1-v_{A}\right)\alpha_{i}e_{i}\right\Vert _{L^{2}(M)}^{2}=\left\Vert \sum_{i=1}^{k}\alpha_{i}\phi_{i}\right\Vert _{L^{2}(M)}^{2}\]
\[
=\sum_{i=1}^{k}\alpha_{i}^{2}\left\Vert \phi_{i}\right\Vert _{L^{2}(M)}^{2}+\sum_{i,j\; i\neq j}\alpha_{i}\alpha_{j}\left\langle {\displaystyle \phi_{i}},{\displaystyle \phi_{j}}\right\rangle _{L^{2}(M)}.\]
And since\[
\sum_{i=1}^{k}\alpha_{i}^{2}\left\Vert \phi_{i}\right\Vert _{L^{2}(M)}^{2}=\sum_{i=1}^{k}\alpha_{i}^{2}\left[1-2\int_{M}e_{i}^{2}v_{A}\, d\mathcal{V}_{g}+\int_{M}e_{i}^{2}v_{A}^{2}\, d\mathcal{V}_{g}\right]\]
\[
=1-\sum_{i=1}^{k}\alpha_{i}^{2}\left[2\int_{M}e_{i}^{2}v_{A}\, d\mathcal{V}_{g}-\int_{M}e_{i}^{2}v_{A}^{2}\, d\mathcal{V}_{g}\right]\]
\[
=1-\sum_{i=1}^{k}\alpha_{i}^{2}\int_{M}e_{i}^{2}\left(2v_{A}-v_{A}^{2}\right)\, d\mathcal{V}_{g};\]
hence \[
\left\Vert \phi\right\Vert _{L^{2}(M)}^{2}=1-\sum_{i=1}^{k}\alpha_{i}^{2}\int_{M}e_{i}^{2}\left(2v_{A}-v_{A}^{2}\right)\, d\mathcal{V}_{g}+\sum_{i,j\; i\neq j}\alpha_{i}\alpha_{j}\left\langle {\displaystyle \phi_{i}},{\displaystyle \phi_{j}}\right\rangle _{L^{2}(M)}\]
we have seen in step 1 that, for $\textrm{cap}(A)$ small enough :\[
\left|\left\langle \phi_{i},\phi_{j}\right\rangle _{L^{2}(M)}-\delta_{i,j}\right|\leq B_{k}\left(\sqrt{\textrm{cap}(A)}+\textrm{cap}(A)\right)\]
hence, since all the $\left(\alpha_{i}\right)_{1\leq i\le k}$ are
bounded in $\mathbb{R}$, and for $\textrm{cap}(A)$ small enough,
we can find a constant $B_{k,M}^{\prime}$ which depends only on $M$
and $V$, ie $B_{k}^{\prime}=B_{k}^{\prime}\left(M,V\right)$ such
that, for $\textrm{cap}(A)$ small enough :\[
\left|\sum_{i,j\; i\neq j}\alpha_{i}\alpha_{j}\left\langle {\displaystyle \phi_{i}},{\displaystyle \phi_{j}}\right\rangle _{L^{2}(M)}\right|\leq B_{k}^{\prime}\sqrt{\textrm{cap}(A)}\]
and finally, in the same spirit as in the estimations in section 2,
there exists a constant $B_{k,M}^{\prime\prime}$ which depends only
on $M$ and $V$, ie $B_{k}^{\prime\prime}=B_{k}^{\prime\prime}\left(M,V\right)$
such that, for $\textrm{cap}(A)$ small enough : \[
\left|\sum_{i=1}^{k}\alpha_{i}^{2}\int_{M}e_{i}^{2}\left(2v_{A}-v_{A}^{2}\right)\, d\mathcal{V}_{g}\right|\leq B_{k}^{\prime\prime}\sqrt{\textrm{cap}(A)}\]
so finally we obtain :\[
\left\Vert \phi\right\Vert _{L^{2}(M)}^{2}\geq1-B_{k}^{\prime\prime\prime}\sqrt{\textrm{cap}(A)}\]
where the constant $B_{k}^{\prime\prime\prime}$ depend only on $M$
and $V$, ie : $B_{k}^{\prime\prime\prime}:=B_{k}^{\prime\prime\prime}\left(M,V\right)$.\\
\\
$\bullet$\textbf{ Final step} : As a consequence from step 2 and
3, for all function $\phi\in E_{k}$ we get :\[
\frac{\int_{M}\left|d\phi\right|^{2}\, d\mathcal{V}_{g}+\int_{M}V\left|\phi\right|^{2}\, d\mathcal{V}_{g}}{\int_{M}\phi^{2}\, d\mathcal{V}_{g}}\leq\frac{\lambda_{k}(M)+I_{k}\left(\textrm{cap}(A)+\sqrt{\textrm{cap}(A)}\right)}{1-B_{k}^{\prime\prime\prime}\sqrt{\textrm{cap}(A)}}\]
hence for $\textrm{cap}(A)$ small enough (ie : $\textrm{cap}(A)\leq\varepsilon_{k}$)
we have\[
\frac{\int_{M}\left|d\phi\right|^{2}\, d\mathcal{V}_{g}+\int_{M}V\left|\phi\right|^{2}\, d\mathcal{V}_{g}}{\int_{M}\phi^{2}\, d\mathcal{V}_{g}}\leq\lambda_{k}(M)+L_{k}\sqrt{\textrm{cap}(A)}\]
where $L_{k}:=L_{k}\left(M,V\right)$. Next, since for all $k\geq1$\[
\lambda_{k}(M-A)=\underset{\underset{\dim(E)=k}{E\subset H_{0}^{1}(M-A)}}{\mathcal{\textrm{min}}}\underset{\underset{\varphi\neq0}{\varphi\in E}}{\mathcal{\textrm{max}}}\frac{\int_{M}\left|d\varphi\right|^{2}\, d\mathcal{V}_{g}+\int_{M}V\left|\varphi\right|^{2}\, d\mathcal{V}_{g}}{\int_{M}\varphi^{2}\, d\mathcal{V}_{g}}\]
and since $\phi\in H_{0}^{1}(M-A)$, we get for all $k\geq1$\[
\lambda_{k}(M-A)\leq\frac{\int_{M}\left|d\phi\right|^{2}\, d\mathcal{V}_{g}+\int_{M}V\left|\phi\right|^{2}\, d\mathcal{V}_{g}}{\int_{M}\phi^{2}\, d\mathcal{V}_{g}}\leq\lambda_{k}(M)+C_{k}\sqrt{\textrm{cap}(A)}.\]
And the statement of the theorem is established.\end{proof}

\vspace{1cm}

\hspace{-0.5cm}\textbf{\large Olivier Lablée}{\large \par}

\hspace{-0.5cm}

\hspace{-0.5cm}Université Grenoble 1-CNRS

\hspace{-0.5cm}Institut Fourier

\hspace{-0.5cm}UFR de Mathématiques

\hspace{-0.5cm}UMR 5582

\hspace{-0.5cm}BP 74 38402 Saint Martin d'Hères 

\hspace{-0.5cm}France

\hspace{-0.5cm}mail: \textcolor{blue}{olivier.lablee@ac-grenoble.fr}

\hspace{-0.5cm}http://www-fourier.ujf-grenoble.fr/\textasciitilde{}lablee/
\end{document}